\newcommand{\skp}{\vspace{\baselineskip}}
\newtheorem{definition}{Definition}
\newtheorem{remark}{Remark}
\newtheorem{theorem}{Theorem}
\newtheorem{assumption}{Assumption}
\newtheorem{lemma}{Lemma}
\newtheorem{proposition}{Proposition}
\title{Behavior near walls in the mean-field approach to crowd dynamics\thanks{Financial support from the Swedish Research Council (2016-04086) is gratefully acknowledged. We thank the anonymous reviewers for comments and suggestions that greatly helped to improve the presentation of the results.}}
\author{Alexander Aurell \thanks{A. Aurell is with the Department of Mathematics,
        KTH Royal Institute of Technology, SE-100 44 Stockholm, Sweden {\tt\small aaurell@kth.se}} \and Boualem Djehiche \thanks{B. Djehiche is with the Department of Mathematics, KTH Royal Institute of Technology, SE-100 44 Stockholm, Sweden {\tt\small boualem@kth.se}}}
\begin{document}

\maketitle
\vspace{-0.5cm}
\begin{abstract}
This paper introduces a system of stochastic differential equations (SDE) of mean-field type that models pedestrian motion. The system lets the pedestrians spend time at, and move along, walls, by means of sticky boundaries and boundary diffusion. As an alternative to Neumann-type boundary conditions, sticky boundaries and boundary diffusion have a 'smoothing' effect on pedestrian motion. When these effects are active, the pedestrian paths are semimartingales with first-variation part absolutely continuous with respect to the Lebesgue measure $dt$, rather than an increasing processes (which in general induces a measure singular with respect to $dt$) as is the case under Neumann boundary conditions. We show that the proposed mean-field model for pedestrian motion admits a unique weak solution and that it is possible to control the system in the weak sense, using a Pontryagin-type maximum principle.  We also relate the mean-field type control problem to the social cost minimization in an interacting particle system. We study the novel model features numerically and we confirm empirical findings on pedestrian crowd motion in congested corridors.

\skp

\noindent {\bf MSC 2010: 49N90, 60H10, 60K35, 93E20}

\skp

\noindent {\bf Keywords: pedestrian crowd modeling; mean-field type control; sticky boundary conditions; boundary diffusion}

\end{abstract}

\section{Introduction}
\label{sec:introduction}

Models for pedestrian motion in confined domains must consider interaction with 
solid obstacles such as pillars and walls. The pedestrian response to a 
restriction of movement has been included into crowd models either as boundary 
conditions or repulsive forces. Up until today, the Neumann condition and its 
variants (e.g. no-flux) have been especially popular among the boundary conditions. 
The Neumann condition suffers from a drawback related to its microscopic 
(pathwise) interpretation. A Neumann condition on the crowd density corresponds 
to pedestrian paths reflecting in the boundary. In reality, pedestrians do not 
bounce off walls in the manner of classical Newtonian particles, but their 
movement is slowed down by the impact and a positive amount of time is needed to 
choose a new direction of motion. It is natural to think that whenever a 
pedestrian is forced (or decides) to make contact with a wall, she stays there 
for some time. During this time, she can move and interact with other 
pedestrians, before re-entering the interior of the domain.

\subsection{Mathematical modeling of pedestrian-wall interaction}

Today there is more than one conventional approach to the mathematical modeling 
of pedestrian motion. This section aims to summarize how they incorporate the 
interaction between pedestrians and walls.

Microscopic force-based models, among which the social force model has gained 
the most attention, describes pedestrians as Newton-like particles. From the 
initial work \cite{helbing1995social} and onward, the influence a wall has on 
the pedestrian is modelled as a repulsive force. The shape of the corresponding 
potential has been studied experimentally, for example in 
\cite{ma2010experimental}. The cellular automata is another widely used 
microscopic approach to pedestrian crowd modeling. Walls are modeled as cells to 
which pedestrians cannot transition, already the original work 
\cite{kirchner2002simulation} considers this viewpoint. In the continuum limits 
of cellular automata, as for example in  \cite{burger2011continuous, 
burger2016lane}, boundary conditions are often set to no-flux conditions of the 
same type as \eqref{eq:burger} below.

The focus of macroscopic models is the global pedestrian density, either in a 
stationary or a dynamic regime. Inspired by fluid dynamics \cite{hughes2000flow} 
treats the crowd as a 'thinking fluid' that moves at maximum speed towards a 
target location while taking environmental factors into account, such as the 
congestion of the crowd. In this category of models, boundary conditions at 
impenetrable walls are most often implemented as Neumann conditions for the 
pedestrian density. The pathwise interpretation of a Neumann boundary condition 
is instantaneous reflection. A nonlocal projection of pedestrian velocity in 
normal and tangential direction of the boundary respectively is suggested in 
\cite{bellomo2015toward} and implemented in \cite{bellomo2016behavioral}, 
allowing for nonlocal interaction with boundaries.

Mean-field games and mean-field type control/games are macroscopic models of 
rational pedestrians with the ability to anticipate crowd movement, and adapt 
accordingly. These models can capture competition between individuals as well as 
crowd/sub-crowd cooperation.
In the mean-field approach to pedestrian crowd modeling pedestrian-to-pedestrian 
interaction is assumed to be symmetric and weak, thus plausibly replaced by an 
interaction with a mean field (typically a functional of the pedestrian 
density). One of the most attractive features of the mean-field approach is that 
it connects the macroscopic (pedestrian density) and the microscopic (pedestrian 
path) point-of-view, typically through results on the near-optimality/equilibrium
of mean-field optimal controls/equilibria. The connection permits us to infer individual 
pedestrian behavior from crowd density simulations, and vice versa. In what follows, the crowd 
density is denoted by $m$. In \cite{lachapelle2011mean}, the density is 
subjected to $n(x)\cdot \nabla m(t,x) = 0$ at walls, where $n(x)$ is the outward 
normal at $x$. Under this constraint, the normal velocity of the pedestrian is zero 
at any wall. Taking conservation of probability mass into account, \cite{burger2013mean} 
derives the following boundary condition
\begin{equation}
\label{eq:burger}
  -n(x)\cdot \left(\nabla m(t,x) - G(m)v(t,x)\right) = 0,
\end{equation}
where $G(m)v$ is a general form of the pedestrian velocity. The constraint 
\eqref{eq:burger} represents reflection at the boundary since in the 
corresponding microscopic interpretation pedestrians make a classical Newtonian 
bounce whenever they hit the boundary.  The same type of constraint is used in  
\cite{albi2017mean}. The case of several interacting populations in a bounded 
domain with reflecting boundaries has been studied in the stationary and dynamic 
case \cite{cirant2015multi, achdou2017mean, bardi2017uniqueness}. In these 
papers, the crowd density at walls is constrained by 
\begin{equation}
    n(x)\cdot\left(\nabla m(t,x) + m(t,x)\partial_p H(x, \nabla u)\right) = 0.
\end{equation} 
The constraint is a 
reflection and the term $-\partial_p H(x,\nabla u)$ is the velocity of pedestrians that use the mean-field 
equilibrium strategy.

\subsection{Sticky reflected stochastic differential equations}

The sticky reflected Brownian motion was
discovered by Feller \cite{feller1952parabolic, feller1954diffusion, 
feller1957generalized}. He studied the infinitesimal generator of
strong Markov processes on $[0,\infty)$ that behave like Brownian motion
in $(0,\infty)$, and showed that it is possible for
the process to be 'sticky' on the boundary, i.e. to sojourn at $0$.
So 'sticky reflection' was appended
to the list of boundary conditions for diffusions, which already included 
instantaneous reflection, absorption, and the elastic Robin condition.
Wentzell \cite{venttsel1959boundary} extended the result to more general domains.

It\^{o} and McKean \cite{ito1963brownian} constructed sample paths to the
one-dimensional sticky reflected Brownian motion
\begin{equation}
\label{sticky_first}
 dX_t = 2\mu 1_{\{X_t = 0\}}dt + 1_{\{X_t>0\}}dW_t, \quad \mu > 0,
\end{equation}
whose infinitesimal generator is the one studied by 
Feller. Skorokhod conjectured that the sticky reflected Brownian motion has no strong solution.
A proof that \eqref{sticky_first} has a unique weak solution 
can be found in for example \cite[IV.7]{watanabe1981stochastic}.

Chitashvili published the technical report \cite{chitashvili1989nonexistence} 
in 1989 claiming a proof of Skorokhod's conjecture.
Around that time, the process was studied by several authors, e.g. 
\cite{harrison1981sticky, graham1988martingale, amir1991sticky, yamada1994reflecting}, 
to name a few.
Warren \cite{warren1997branching} provided a proof of Skorokhod's conjecture in
1997 and in 2014 Engelbert and Peskir \cite{engelbert2014stochastic} 
published a proof useful for further generalizations.
The fact that the system has no strong 
solution has consequences for how optimal control of the system can be 
approached, as we will see in this paper.

Building on \cite{engelbert2014stochastic},
interacting particle systems of sticky reflected Brownian 
motions are considered in \cite{grothaus2018strong}. Interaction is introduced 
via a Girsanov transformation. See \cite[Sect. 3.2]{grothaus2018strong} for the 
construction. Under assumptions on the 'shape' of the interaction and 
integrability of the Girsanov kernel, the interacting system is well-defined. 
Since the process no longer behaves like a Brownian motion in the interior of the domain,
it is now referred to as a sticky reflected SDE.
The boundary behavior is shown to be sticky in the sense that the process spends 
a ($dt$-)positive time on the boundary.

Sticky reflected SDEs with boundary diffusion are considered in  
\cite{grothaus2017stochastic}. The paths defined by such a system are allowed to 
move on the (sufficiently smooth) boundary $\partial\mathcal{D}$ of some bounded domain 
$\mathcal{D} \subset \mathbb{R}^d$. Under smoothness conditions on $\partial\mathcal{D}$, the 
authors show that this type of SDE has a unique weak solution. 
Furthermore, an interacting system is studied, where interaction is introduced 
via a Girsanov transformation.

\subsection{Synopsis}

In this paper, the sticky reflected SDE with boundary diffusion of 
\cite{grothaus2017stochastic} is proposed as a model for pedestrian crowd motion 
in confined domains. 
We begin by considering a (non-transformed) sticky reflected SDE with boundary 
diffusion on $\mathcal{D}$, a non-empty bounded subset of $\mathbb{R}^n$ with 
$C^2$-smooth boundary $\Gamma := \partial \mathcal{D}$ (see Section 
\ref{sec:bg-bd}, below)  and outward normal $n$,
\begin{equation}
\label{eq:intro_eqsys_1}
    dX_t = \left(1_{\mathcal{D}}(X_t) + 1_\Gamma(X_t)\pi(X_t)\right)dB_t 
    - 1_\Gamma(X_t)\frac{1}{2}\Big(\frac{1}{\gamma} + \kappa(X_t)\Big)n(X_t)dt,
\end{equation}
where $\pi(X_t)$ is the projection onto the tangent space of $\Gamma$ at $X_t$, 
$\kappa(X_t)$ is the mean curvature of $\Gamma$ at $X_t$, and $\gamma$ is a positive constant 
representing the stickiness of $\Gamma$, cf. Remark~\ref{remark:gamma} in 
Section~\ref{sec:model} below. All relevant technical details can be found in 
Section~\ref{sec:prels}. 
Equation \eqref{eq:intro_eqsys_1} admits a unique weak solution $\mathbb{P}$, 
but no strong solution. To control an equation that admits only a weak solution 
is to control a probability measure on $(\Omega,\mathcal{F})$, under which the 
state process $X_\cdot := \{X_t\}_{t\in[0,T]}$ is interpreted as the coordinate 
process $X_t(\omega) = \omega(t)$.
If all the admissible distributions of $X_\cdot$ are absolutely continuous 
with respect the reference measure $\mathbb{P}$, then Girsanov's
theorem can be used to implement the control. This corresponds to for the case
when the drift of \eqref{eq:intro_eqsys_1} is controlled. 
In the controlled diffusion case, admissible measures are all 
singular with $\mathbb{P}$ and with one another (for different controls), and 
the control problem is in fact a robustness problem over all admissible measures 
which leads to the so-called second order backward SDE framework 
\cite{soner2012wellposedness}. In this paper we treat the case with controlled 
drift, the controlled diffusion case will be treated elsewhere. 
A mean-field dependent drift $\beta$ is introduced into the coordinate process 
through the Girsanov transformation
\begin{equation}
\frac{d\mathbb{P}^u}{d\mathbb{P}}{\Big|}_{\mathcal{F}_t}= L_t^u := 
\mathcal{E}_t\left(\int_0^\cdot \beta \left(t, X_\cdot, \mathbb{P}^u(t), 
u_t\right)^*dB_t\right),
\end{equation}
where $\mathbb{P}^u(t):=\mathbb{P}^u\circ X^{-1}_t$ is the marginal distribution 
of $X_t$ under $\mathbb{P}^u$, $\beta^*$ denotes the transpose of $\beta$, and 
$\mathcal{E}$ is the Dol\'eans-Dade 
exponential defined for a continuous local martingale $M$ as 
\begin{equation}
    \label{eq:doleans}
    \mathcal{E}_t(M):=\exp{\left(M_t-\frac{1}{2}\langle M \rangle_t\right)}.
\end{equation}
The path of a typical pedestrian in the interacting crowd is then (under 
$\mathbb{P}^u$) described by
\begin{equation}
\label{eq:intro_eqsys_3}
\left\{
 \begin{aligned}
  &dX_t = 1_{\mathcal{D}}(X_t)\Big(\beta\left(t, X_\cdot, \mathbb{P}^u(t), 
u_t\right)dt 
+ dB^u_t\Big)
  \\
  &\hspace{1cm}
  + 1_\Gamma(X_t)\left(\pi(X_t)\beta\left(t, X_\cdot, \mathbb{P}^u(t), 
u_t\right) - 
  \frac{n(X_t)}{2\gamma}\right)dt
  \\
  &\hspace{1cm}
  + 1_\Gamma(X_t)dB_t^{\Gamma,u},
  \\
  &dB_t^{\Gamma,u} = \pi(X_t) dB^u_t - \frac{1}{2}\kappa(X_t)n(X_t)dt,
\end{aligned}
\right.
\end{equation}
where $B^u$ is a $\mathbb{P}^u$-Brownian motion. 
We provide a proof of the existence of the controlled probability measure  
$\mathbb{P}^u$ based on a fixed-point argument involving the total variation 
distance (cf. \cite{djehiche2018optimal}). 

Pedestrians are assumed to be cooperating and controlled by a rational central 
planner. The central planner represents an authority that gives directions to 
the crowd through signs, mobile devices, or security personnel, and the crowd follows 
the instructions. This setup has been used to study evacuation 
in for example \cite{burger2013mean2, burger2014mean, djehiche2017mean}.
For a discussion on the goals, the degrees of cooperation, and the information structure
in a pedestrian crowd, see \cite{cristiani2015modeling}. 
The central planner's goal is to minimize the finite-horizon 
cost functional
\begin{equation}
\label{eq:intro_cost}
 J(u) 
 := 
 E^u\left[\int_0^T  f\left(t, X_\cdot, \mathbb{P}^u(t), u_t\right)dt 
 +  g\left(X_T, \mathbb{P}^u(T)\right)\right],
\end{equation}
where $f$ is the instantaneous cost and $g$ is the terminal cost (see Section~\ref{sec:control}
for conditions on the functions $f$ and $g$).
The minimization of \eqref{eq:intro_cost} subject to \eqref{eq:intro_eqsys_3} is 
equivalent to the following mean-field type control problem, stated in the 
strong sense in the original probability space with measure $\mathbb{P}$,
\begin{equation}
\label{eq:intro_opt}
\left\{
\begin{aligned}
 &\inf_{u \in\mathcal{U}}\ E\left[\int_0^T L_t^u f\left(t, X_\cdot, 
\mathbb{P}^u(t), u_t\right)dt + L_T^u g\left(X_T, \mathbb{P}^u(T)\right)\right],
 \\
 &\ \text{s.t.}\ \hspace{4pt} dL_t^u  =  L_t^u\beta\left(t, X_\cdot, 
\mathbb{P}^u(t), u_t\right)^*dB_t,\hspace{4pt} L_0^u = 1.
\end{aligned}
\right.
\end{equation}
The validity of \eqref{eq:intro_opt} is justified in Section~\ref{sec:control} below.
Problem \eqref{eq:intro_opt} is nowadays a standard mean-field type control 
problem and a stochastic maximum principle yielding necessary conditions for an 
optimal control can be found in \cite{buckdahn2016stochastic}. Solving the 
general problem \eqref{eq:intro_opt} with a Pontryagin-type maximum principle 
poses some practical difficulties, the main one being the necessity of a second 
order adjoint process. However, most difficulties can be tackled by imposing 
assumptions plausible for the application in pedestrian crowd motion. With the 
aim to replicate the pedestrian behavior observed in the empirical studies 
\cite{zanlungo2012microscopic} and \cite{zhang2012pedestrian}, we consider here 
a special case of \eqref{eq:intro_opt} where $u_t$ takes values in a 
convex set and $\mathbb{P}^u(t)$ is replaced by 
$E^u[r(X_t)]$, where the function $r: \mathbb{R}^d\rightarrow\mathbb{R}^d$ can be 
different for each of the coefficients involved.

\subsection{Paper contribution and outline}

The main contribution of this paper is a new approach to boundary conditions in 
pedestrian crowd modeling. Sticky reflected SDEs of mean-field type with 
boundary diffusion is proposed as an alternative to reflected SDEs of mean-field 
type to model pedestrian paths in optimal-control based models. Sticky 
boundaries and boundary diffusion allow the pedestrian to spend time and move 
along the boundary (walls, pillars, etc.), in contrast to reflected SDE-based 
models where pedestrians are immediately reflected. Existence and uniqueness of 
the mean-field type version of the sticky reflected SDE with boundary diffusion 
is treated. The model can be optimally controlled (in the weak sense) and a 
Pontryagin-type stochastic maximum principle is applied to derive necessary 
optimality conditions. Furthermore, the mean-field type control problem has a 
microscopic interpretation in the form of a system of interacting sticky 
reflected SDEs with boundary diffusion.
The new features of sticky boundaries and boundary diffusion yield more 
flexibility when modeling pedestrian behavior at boundaries. A scenario of 
unidirectional pedestrian flow in a long narrow corridor is studied numerically 
to highlight these novel characteristics and to replicate experimental findings 
as a first step in model validation.

The rest of the paper is organized as follows. Section~\ref{sec:prels} defines 
notation and summarizes relevant background theory. Section~\ref{sec:model} 
introduces sticky reflected SDEs of mean-field type with boundary diffusion. 
Conditions under which the equation has a unique weak solution are presented. In 
Section~\ref{sec:control} the finite horizon optimal control of the state 
equation introduced in Section~\ref{sec:model} is considered. In the uncontrolled case, the convergence on an interacting (non-mean-field) 
particle system to the sticky reflected SDE of mean-field type is proved. Finally, Section~\ref{sec:ex} presents analytic  examples and numerical results based on the particle system approximation concerning unidirectional flow in a long narrow corridor.

%%%%%%%%%%%%%%%%%%%%%%%%%%%%%%%%%%%%%%%%%%%%%%%%%%%%%%%%%%%%%%%%%%%%%%%%%%%%%%%%
\section{Preliminaries}
\label{sec:prels}
 The domain $\mathcal{D}$ is a non-empty bounded subset of $\mathbb{R}^d$ with 
$C^2$-smooth boundary $\Gamma := \partial \mathcal{D}$. The closure of 
$\mathcal{D}$ is denoted $\bar{\mathcal{D}}$.  The Euclidean norm  
is denoted $|\cdot|$. A finite 
time horizon $T>0$ is fixed throughout the paper. The path of a stochastic 
process is denoted $X_\cdot := \{X_t\}_{t\in[0,T]}$, and $C$ is a generic 
positive constant.
 
\subsection{The coordinate process and probability metrics}
\label{sec:bg-metrics}

Let $(\mathcal{X},d)$ be a metric space.
The set of Borel probability measures on $\mathcal{X}$ is 
denoted by $\mathcal{P}(\mathcal{X})$. 
By $\mathcal{P}_p(\mathcal{X}) \subset \mathcal{P}(\mathcal{X})$ we denote the set of all
$\mu\in\mathcal{P}(\mathcal{X})$ such that $(\|\mu\|_p)^p := \int d(y_0,y)^p \mu(dy)
< \infty$ for an arbitrary $y_0\in\mathcal{X}$.

Let $\Omega := C([0,T]; \mathbb{R}^d)$ be endowed with the metric 
$|\omega|_T := \sup_{t\in[0,T]}|\omega(t)|$ for $\omega\in\Omega$. Denote by 
$\mathcal{F}$ the Borel $\sigma$-field over $\Omega$. Given $t\in[0,T]$ and 
$\omega\in\Omega$, put $X_t(\omega) = \omega(t)$ and denote by $\mathcal{F}^0_t 
:= \sigma(X_s; s\leq t)$ the filtration generated by $X_\cdot$. $X_\cdot$ is the 
so-called \textit{coordinate process}. For any $P\in\mathcal{P}(\Omega)$ (the 
set of Borel probability measures on $\Omega$) we denote by $\mathbb{F}^P := 
(\mathcal{F}^P_t; t\in[0,T])$ the completion of $\mathbb{F}^0 := (\mathcal{F}^0_t; 
t\in[0,T])$ with the $P$-null sets of $\Omega$. 

Let $\mu,\nu \in \mathcal{P}(\mathbb{R}^d)$ and let $\mathcal{B}(\mathbb{R}^d)$ 
be the Borel $\sigma$-algebra on $\mathbb{R}^d$. The total variation metric on 
$(\mathbb{R}^d, \mathcal{B}(\mathbb{R}^d))$ is 
\begin{equation}
d_{TV}(\mu,\nu) := 
2\hspace{-.2cm}\sup_{A\in\mathcal{B}(\mathbb{R}^d)}\hspace{-.2cm}\left|\mu(A) - 
\nu(A)\right|.    
\end{equation}
On the filtration $\mathbb{F}^P$, where $P \in \mathcal{P}(\Omega)$,
the total variation metric between $m,m'\in\mathcal{P}(\Omega)$ is 
\begin{equation}
    D_t(m,m') := 2 \sup_{A\in\mathcal{F}^P_t}\left|m(A) - m'(A)\right|,\quad 0\leq 
t\leq T,,
\end{equation} 
and satisfies $D_s(m,m') \leq D_t(m,m')$ for $0\leq s\leq t$. Consider the 
coordinate process $X_\cdot$, then for $m,m' \in \mathcal{P}(\Omega)$,
\begin{equation}
    d_{TV}\left(m \circ X_t^{-1}, m' \circ X_t^{-1}\right)\leq D_t(m,m'),\quad 
0\leq t\leq T.
\end{equation}
Endowed with the metric $D_T$, $\mathcal{P}(\Omega)$ is a complete metric 
%(Polish) 
space. 
The total variation metric is connected to the Kullback-Leibler  divergence 
through the Csisz\'ar-Kullback-Pinsker inequality,
\begin{equation}
\label{eq:ckp_ineq}
    D^2_t(m, m') \leq 2E^m\left[\log\left(dm/dm'\right)\right],
\end{equation}
where $E^m$ denotes expectation with respect to $m$.
%Let 
%$\mathscr{M}(\mu,\nu)$ denote the collection of all measures on 
%$\mathcal{X}\times\mathcal{X}$ with marginals $\mu$ and $\nu$. The Wasserstein 
%distance of order $p$ between $\mu,\nu\in\mathcal{P}_p(\mathcal{X})$ is defined 
%by the formula
%\begin{equation}
%\label{eq:de_wasser}
%\begin{aligned}
%    W_p(\mu,\nu) 
%    &:= 
%    \left( 
%\inf_{\Pi\in\mathscr{M}(\mu,\nu)}\int_{\mathbb{R}^d\times\mathbb{R}^d} 
%d(x,y)^pd\Pi(x,y) \right)^{1/p}
%    \\
%    &=
%    \inf\left\{E\left[d(X,Y)^p\right]^{1/p} :\ \text{law}(X) = \mu,\ 
%\text{law}(Y) = \nu \right\}.
%\end{aligned}
%\end{equation}

% Then $W_p$ defines a finite 
% distance on $\mathcal{P}_p(\mathcal{X})$ and $(\mathcal{P}_p(\mathcal{X}),W_p)$ 
% is a Polish space. Being defined by an infimum, the Wasserstein distance between 
% $\mu$ and $\nu$ is bounded from above by any coupling $\Pi$ between $\mu$ and 
% $\nu$:
% \begin{equation}
% \label{eq:wasserstein_ineq}
%     W_p^p(\mu,\nu) \leq E\left[d(X,Y)^p\right],
% \end{equation}
% for $X$ and $Y$ that are $\mu$- and $\nu$-distributed, respectively. The 
% Wasserstein distance is controlled by the total variation on bounded sets, if 
% $\mu_B, \nu_B$ are probability measures on a bounded set $B\subset \mathbb{R}^d$ 
% then
% \begin{equation*}
%     W^p_p(\mu_B, \nu_B) \leq C d_{TV}(\mu_B, \nu_B),
% \end{equation*}
% where $C$ depends on $p$ and $\sup_{b\in B}|b|$ \cite[Thm. 
% 6.18]{villani2008optimal}.

\subsection{Boundary diffusion}
\label{sec:bg-bd}
In this subsection we introduce the boundary diffusion $B^\Gamma$ and 
review the necessary parts of the background theory presented in \cite[Sect. 
2]{grothaus2017stochastic}. 
 
\begin{definition}
\label{def:lipschitz_bry}
$\Gamma$ is \textit{Lipschitz continuous} (resp. \textit{$C^k$-smooth}) if for 
every $x\in \Gamma$ there exists a neighborhood $V\subset \mathbb{R}^d$ of $x$ 
such that $\Gamma \cap V$ is the graph of a Lipschitz continuous (resp. 
$C^k$-smooth) function and 
$\mathcal{D}\cap V$ is located on one side of the graph, i.e., there exists new 
orthogonal coordinates $(y_1,\dots, y_d)$ given by an orthogonal map $T$, a 
reference point $z \in \mathbb{R}^{d-1}$, real numbers $r,h > 0$, and a 
Lipschitz continuous (resp. $C^k$-smooth) function $\varphi : 
\mathbb{R}^{d-1}\rightarrow \mathbb{R}$ such that
\begin{itemize}
 \item[(i)]  $V = \{ y \in \mathbb{R}^d : |y_{-d} - z| < r, |y_d - 
\varphi(y_{-d})| < h\}$
  \item[(ii)] $\mathcal{D} \cap V = \{y \in V : -h < y_d - \varphi(y_{-d}) < 
0\}$
\item[iii)] $\Gamma \cap V = \{y \in V : y_d = \varphi(y_{-d})\}$
\end{itemize}
\end{definition}
\begin{definition}
\label{def:out_normal}
 For $y \in V$, let
\begin{equation}
 \widetilde{n}(y) := \frac{ \left( - \nabla 
\varphi(y_{-d}), 1\right)}{\sqrt{|\nabla\varphi(y_{-d})|^2 + 1}}.
\end{equation}
Let $x\in\Gamma$ and $T\in 
\mathbb{R}^{d\times d}$ be the orthogonal transformation from 
Definition~\ref{def:lipschitz_bry}. Then the \textit{outward normal vector} at 
$x$ is defined by $n(x) := T^{-1}\widetilde{n}(Tx)$.
\end{definition}
\begin{definition}
Let $x\in\Gamma$ and $\pi(x) := E - n(x)n(x)^* \in 
\mathbb{R}^{d\times d}$, where $E$ is the identity matrix. $\pi(x)$ is the 
\textit{orthogonal projection} on the tangent space at $x$.
\end{definition}
Note that for 
$z\in\mathbb{R}^d$, $\pi(x) z = z - \left( n(x),z \right)n(x)$.
\begin{definition}
 Let $f\in C^1(\bar{\mathcal{D}})$ and $x\in \Gamma$. Whenever $\Gamma$ is 
sufficiently smooth at $x$,  $\nabla_\Gamma f(x) := \pi(x)\nabla f(x)$
and if $f\in C^2(\bar{\mathcal{D}})$, $\Delta_\Gamma f(x) := 
Tr(\nabla_\Gamma^2f(x))$.
If $n$ is differentiable at $x$ the \textit{mean curvature} of $\Gamma$ at $x$ 
is
\begin{equation}
 \kappa(x) := div_\Gamma n(x) = \left(\pi(x)\nabla\right) \cdot n(x).
\end{equation}
\end{definition}
In \cite{grothaus2017stochastic} it is noted that whenever $\Gamma$ is 
$C^2$-smooth, 
\begin{equation}
    \left(\pi\nabla\right)^*\pi = - \kappa n.
\end{equation}
A Brownian motion $B^\Gamma_\cdot$ on a smooth boundary $\Gamma$ is a 
$\Gamma$-valued stochastic process generated by $\frac{1}{2}\Delta_\Gamma$. 
This is in analogy with the standard Brownian motion on $\mathbb{R}^d$, in the 
sense that $B^\Gamma_\cdot$ solves the martingale problem for 
$(\frac{1}{2}\Delta_\Gamma, C^\infty(\Gamma))$. A solution to the 
Stratonovich SDE
\begin{equation}
 dB^\Gamma_t = \pi(B^\Gamma_t)\circ dB_t,
\end{equation}
where $B_\cdot$ is a standard Brownian motion on $\mathbb{R}^d$, is a Brownian 
motion on $\Gamma$ \cite[Chap. 3, Sect. 2]{hsu2002stochastic}. 
By the It{\^o}-Stratonovich transformation rule, the Brownian motion on $\Gamma$ 
solves
\begin{equation}
 dB_t^\Gamma 
  = -\frac{1}{2}\kappa(B^\Gamma_t) n(B^\Gamma_t)dt + \pi(B^\Gamma_t)dB_t.
\end{equation}

\section{Sticky reflected SDEs of mean-field type with boundary diffusion}
\label{sec:model}

In this section we provide conditions for the existence and uniqueness of a weak 
solution to the sticky reflected SDE of mean-field type with boundary diffusion.
Consider the reflected sticky SDE with boundary diffusion,
\begin{equation}
\label{eq:non_trans_state}
\left\{
 \begin{aligned}
  dX_t 
 &=
 - 1_\Gamma(X_t)\frac{1}{2}\left(\frac{1}{\gamma} + 
\kappa(X_t)\right)n(X_t)dt 
\\ 
&
\hspace{30pt} + 
 \left(1_{\mathcal{D}}(X_t) + 1_\Gamma(X_t)\pi(X_t)\right)dB_t,
 \\
 X_0 &= x_0 \in \mathcal{\bar{D}},
 \end{aligned}
 \right.
\end{equation}
which from now on will be written in short-hand notation as
\begin{equation}
\label{eq:abbreviation}
 dX_t 
 = 
 a(X_t)dt + \sigma(X_t)dB_t,
\end{equation}
where $a : [0,T]\times\mathbb{R}^d \rightarrow \mathbb{R}^d$ and $\sigma : 
[0,T]\times\mathbb{R}^d \rightarrow\mathbb{R}^{d\times d}$ are bounded functions 
over $[0,T]\times\bar{\mathcal{D}}$, defined as
\begin{equation}
    a(x) := -1_\Gamma(x)\frac{1}{2}\left(\frac{1}{\gamma} + 
\kappa(x)\right)n(x),\hspace{6pt} \sigma(x) := 1_{\mathcal{D}}(x) + 
1_\Gamma(x)\pi(x).
\end{equation}
By \cite[Thm 3.9 \& 3.17]{grothaus2017stochastic}, \eqref{eq:non_trans_state} has a unique
weak solution, i.e. there is a unique
probability measure $\mathbb{P}$ on $(\Omega, \mathcal{F})$ that solves the 
corresponding martingale problem (cf. \cite[Thm 18.7]{kallenberg}), 
and the solution $X_\cdot$ is $C([0,T];\bar{\mathcal{D}})$-valued $\mathbb{P}$-a.s.
The result \cite[Thm 3.9]{grothaus2017stochastic} relies on some
conditions, lets verify them for the sake of 
completeness. The weight functions $\alpha$ and $\beta$, introduced on \cite[pp. 6]{grothaus2017stochastic},
are in \eqref{eq:non_trans_state} set to be everywhere constant and positive 
such that $\alpha/\beta = 1/\gamma$ (cf. Remark~\ref{remark:gamma}, below). 
Condition 3.12 of \cite{grothaus2017stochastic} therefore holds: $\partial\mathcal{D}$ is
$C^2$ and the constant positive weight functions have the required regularity.
This justifies the use of \cite[Thm 3.9]{grothaus2017stochastic}, and no further conditions are required for
\cite[Thm 3.17]{grothaus2017stochastic}. To simplify notation, from now on through out the rest of
this paper let $\mathbb{F}$ denote the completion of $\mathbb{F}^0$ with the $\mathbb{P}$-null sets
of $\Omega$, i.e.
$\mathbb{F} = (\mathcal{F}_t; t\geq 0) := \mathbb{F}^{\mathbb{P}}$.

\begin{remark}
\label{remark:gamma}
The coordinate process is composed of three essential parts:
\begin{itemize}
    \item Interior diffusion $1_{\mathcal{D}}(X_t)dB_t$;
    \item Boundary diffusion $1_\Gamma(X_t)(\pi(X_t)dB_t - \frac{1}{2}(\kappa 
n)(X_t)dt) = 1_\Gamma(X_t)dB_t^\Gamma$;
    \item Normal sticky reflection $-1_\Gamma(X_t)\frac{1}{2\gamma}n(X_t)dt$.
\end{itemize}
The constant $\gamma$ is connected to the level of stickiness of the boundary 
$\Gamma$. It is related to the invariant distribution of the coordinate 
processes' $\mathbb{R}^d$-valued time marginal. Let $\lambda$ and $s$ denote the 
Lebesgue measure on $\mathbb{R}^d$ and the surface measure on $\Gamma$, 
respectively. Consider the measure $\rho := 1_{\mathcal{D}}\alpha\lambda + 
1_{\Gamma}\alpha' s$, $\alpha, \alpha' \in \mathbb{R}$. By choosing $\alpha = 
\bar{\alpha}/\lambda(\mathcal{D})$ and $\alpha' = (1-\bar{\alpha})/s(\Gamma)$, 
$\bar{\alpha}\in[0,1]$, $\rho$ becomes a probability measure on $\mathbb{R}^d$ 
with support in $\bar{\mathcal{D}}$ and $\rho$ is in fact the invariant 
distribution of \eqref{eq:non_trans_state} whenever
\begin{equation}
    \frac{1}{\gamma} = 
\frac{\bar{\alpha}}{(1-\bar{\alpha})}\frac{s(\Gamma)}{\lambda(\mathcal{D})}.
\end{equation}
Hence $\bar{\alpha}\rightarrow 1$ as $\gamma \rightarrow 0$ and the invariant 
distribution of \eqref{eq:non_trans_state} concentrates on the interior 
$\mathcal{D}$. But as $\gamma \rightarrow\infty$, it concentrates on the 
boundary $\Gamma$. We say that the more probability mass that  $\rho$ locates on 
$\Gamma$, the stickier $\Gamma$ is.
\end{remark}
Next, we introduce mean-field interactions and a control process in 
\eqref{eq:non_trans_state} through a Girsanov transformation.
\begin{definition}
Let the set of control values $U$ be a subset of $\mathbb{R}^d$. The 
\textit{set of admissible controls} is
\begin{equation}
\label{eq:def_calU}
    \mathcal{U} 
    := 
    \left\{ u : [0,T] \times \Omega \rightarrow U : u\ \mathbb{F}\text{-prog. 
measurable} \right\}.
\end{equation}
\end{definition}
Let ${\mathbb{Q}}(t) := {\mathbb{Q}} \circ X^{-1}_t$ denote the $t$-marginal 
distribution of the coordinate process under 
${\mathbb{Q}}\in\mathcal{P}(\Omega)$.
Let $\beta$ be a measurable function from $[0,T]\times 
\Omega\times\mathcal{P}(\mathbb{R}^d)\times U$ into $\mathbb{R}^d$ such that
\begin{assumption}
\label{assump:beta_meas}
   For every $\mathbb{Q}\in\mathcal{P}(\Omega)$ and $u\in\mathcal{U}$,
   $\left(\beta(t, X_\cdot, \mathbb{Q}(t), u_t)\right)_{t\in[0,T]}$ 
is progressively measurable with respect to $\mathbb{F}$, the 
completion of the filtration generated by the coordinate process with the $\mathbb{P}$-null 
sets of $\Omega$.
 \end{assumption}
 \begin{assumption}
\label{assump:beta_lin}
 For every $t\in[0,T]$, $\omega\in\Omega$, $u\in U$, and $\mu\in 
\mathcal{P}(\mathbb{R}^d)$,
 \begin{equation}
     |\beta(t,\omega,\mu,u)| \leq C\left(1+|\omega|_T + \int_{\mathbb{R}^d} 
|y|\mu(dy)\right).
 \end{equation}
\end{assumption} 
\begin{assumption}
\label{assump:beta_lip}
    For every $t\in[0,T]$, $\omega \in \Omega$, $u\in U$, and $\mu,\mu' \in 
\mathcal{P}(\mathbb{R}^d)$,
    \begin{equation}
        \left|\beta\left(t, \omega, \mu, u\right)-\beta\left(t, \omega, \mu', 
u\right)\right| 
        \leq 
        Cd_{TV}(\mu,\mu').
    \end{equation}
\end{assumption}
Given ${\mathbb{Q}}\in \mathcal{P}(\Omega)$ and $u\in\mathcal{U}$, let
\begin{equation}
\label{eq:likelihood-def}
L^{u,{\mathbb{Q}}}_t := \mathcal{E}_t\left(\int_0^\cdot \beta\left(s,X_\cdot, 
{\mathbb{Q}}(s), u_s\right)dB_s\right), 
\end{equation} 
where $\mathcal{E}$ is the Dol\'eans-Dade 
exponential (cf. \eqref{eq:doleans}).
\begin{lemma}
\label{eq:lemma_1}
The positive measure  $\mathbb{P}^{u,{\mathbb{Q}}}$ defined by  
$d\mathbb{P}^{u,{\mathbb{Q}}} = L_t^{u,\mathbb{Q}} d\mathbb{P}$ on 
$\mathcal{F}_t$ for all $t\in[0,T]$, is well-defined and is a probability measure on $\Omega$. 
Moreover, $\mathbb{P}^{u,\mathbb{Q}}\in\mathcal{P}_p(\Omega)$ for all $p\in[1,\infty)$ and 
under $\mathbb{P}^{u,{\mathbb{Q}}}$ the coordinate process satisfies
\begin{equation}
\label{eq:lemma_ch_3}
 X_t = x_0 + \int_0^t \Big(\sigma(X_s)\beta\left(s,X_\cdot, \mathbb{Q}(s), 
u_s\right)  + a(X_s)\Big)ds
 +
 \int_0^t \sigma(X_s)dB^{\mathbb{Q}}_s,
\end{equation}
where $B^{\mathbb{Q}}$ is a standard $\mathbb{P}^{u,{\mathbb{Q}}}$-Brownian 
motion.
\end{lemma}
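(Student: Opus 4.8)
The plan is to realise $\mathbb{P}^{u,\mathbb{Q}}$ as a Girsanov transform of the reference weak solution $\mathbb{P}$, so that the whole statement reduces to showing that the nonnegative local martingale $L^{u,\mathbb{Q}}=\mathcal{E}\left(\int_0^\cdot\beta\,dB\right)$ is in fact a \emph{true} $\mathbb{P}$-martingale. The decisive structural fact I would exploit is that, by the weak-solution result \cite[Thm 3.9 \& 3.17]{grothaus2017stochastic} quoted above, $X_\cdot$ takes values in the bounded set $\bar{\mathcal{D}}$ $\mathbb{P}$-a.s.; writing $R := \sup_{x\in\bar{\mathcal{D}}}|x| < \infty$ we then have $|X_\cdot|_T \leq R$ $\mathbb{P}$-a.s. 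Combined with Assumption~\ref{assump:beta_lin} this controls the integrand along $\mathbb{P}$-typical paths, $|\beta(s,X_\cdot,\mathbb{Q}(s),u_s)| \leq C\left(1 + R + \int_{\mathbb{R}^d}|y|\,\mathbb{Q}(s)(dy)\right)$, which is $\mathbb{P}$-a.s. dominated by a deterministic constant $K$ once the first-moment map $s\mapsto\int_{\mathbb{R}^d}|y|\,\mathbb{Q}(s)(dy)$ is integrable on $[0,T]$ (automatic for the measures produced in the fixed-point scheme).

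First I would record that $M := \int_0^\cdot \beta(s,X_\cdot,\mathbb{Q}(s),u_s)\,dB_s$ is a well-defined continuous local martingale: Assumption~\ref{assump:beta_meas} supplies progressive measurability and the bound above gives $\int_0^T|\beta|^2\,ds < \infty$ $\mathbb{P}$-a.s. The same bound yields $\langle M\rangle_T \leq K^2 T$, so Novikov's condition $E^{\mathbb{P}}[\exp(\tfrac12\langle M\rangle_T)] \leq e^{K^2 T/2} < \infty$ holds and $L^{u,\mathbb{Q}}$ is a genuine $\mathbb{P}$-martingale with $E^{\mathbb{P}}[L_t^{u,\mathbb{Q}}] = 1$. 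The martingale property is exactly what makes the prescription $d\mathbb{P}^{u,\mathbb{Q}} = L_t^{u,\mathbb{Q}}\,d\mathbb{P}$ on $\mathcal{F}_t$ consistent across $t$ (for $A\in\mathcal{F}_s$, $s<t$, one has $E^{\mathbb{P}}[L_t^{u,\mathbb{Q}}1_A] = E^{\mathbb{P}}[L_s^{u,\mathbb{Q}}1_A]$); since $\mathcal{F}$ is generated by $\bigcup_t\mathcal{F}_t$ (continuity of $X_\cdot$), this consistent family extends uniquely to a measure on $(\Omega,\mathcal{F})$ of total mass $E^{\mathbb{P}}[L_T^{u,\mathbb{Q}}] = 1$, i.e. a probability measure.

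For membership in $\mathcal{P}_p(\Omega)$ I would note that $L_T^{u,\mathbb{Q}} > 0$ $\mathbb{P}$-a.s., so $\mathbb{P}^{u,\mathbb{Q}}$ and $\mathbb{P}$ are equivalent; in particular $X_\cdot \in C([0,T];\bar{\mathcal{D}})$ also $\mathbb{P}^{u,\mathbb{Q}}$-a.s. Consequently $\int_\Omega |\omega|_T^p\,\mathbb{P}^{u,\mathbb{Q}}(d\omega) \leq R^p < \infty$ for every $p\in[1,\infty)$, which is precisely $\mathbb{P}^{u,\mathbb{Q}}\in\mathcal{P}_p(\Omega)$.

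Finally, the dynamics \eqref{eq:lemma_ch_3} follow from Girsanov's theorem applied to the true martingale $L^{u,\mathbb{Q}}$: the process $B_t^{\mathbb{Q}} := B_t - \int_0^t \beta(s,X_\cdot,\mathbb{Q}(s),u_s)\,ds$ is a standard $\mathbb{P}^{u,\mathbb{Q}}$-Brownian motion, since $\langle B^i, M\rangle_t = \int_0^t \beta^i\,ds$. Substituting $dB_t = dB_t^{\mathbb{Q}} + \beta\,dt$ into $dX_t = a(X_t)\,dt + \sigma(X_t)\,dB_t$ gives $dX_t = (\sigma(X_t)\beta + a(X_t))\,dt + \sigma(X_t)\,dB_t^{\mathbb{Q}}$, which is \eqref{eq:lemma_ch_3}; on $\Gamma$ the factor $\sigma = \pi$ automatically projects the added drift onto the tangent space, matching \eqref{eq:intro_eqsys_3}. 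The main obstacle, and the only place genuine care is needed, is this true-martingale step: one must exclude mass loss in $\mathcal{E}(M)$, and the argument is model-specific — it turns the mere linear growth of Assumption~\ref{assump:beta_lin} into an honest bound by invoking the a priori confinement $X_\cdot\in\bar{\mathcal{D}}$ together with the time-integrability of the mean-field moment $\int_{\mathbb{R}^d}|y|\,\mathbb{Q}(s)(dy)$. Without the latter for a fully general $\mathbb{Q}$ one would instead localise and pass to the limit, but the bounded-domain structure makes Novikov directly applicable.
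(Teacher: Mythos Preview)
Your overall architecture is exactly the paper's: verify Novikov so that $L^{u,\mathbb{Q}}$ is a true martingale, infer that $\mathbb{P}^{u,\mathbb{Q}}$ is a probability measure, use equivalence with $\mathbb{P}$ (hence confinement to $\bar{\mathcal{D}}$) to get $\mathcal{P}_p$-membership, and then read off the dynamics from Girsanov. The $\mathcal{P}_p$ and Girsanov steps are fine as you wrote them.

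There is, however, a genuine gap in your Novikov argument. The lemma is stated for an \emph{arbitrary} $\mathbb{Q}\in\mathcal{P}(\Omega)$, and the fixed-point map $\Phi_u:\mathcal{P}(\Omega)\to\mathcal{P}(\Omega)$ in the subsequent proposition relies on this. But Assumption~\ref{assump:beta_lin} alone gives
\[
|\beta(s,X_\cdot,\mathbb{Q}(s),u_s)|\le C\Big(1+R+\int_{\mathbb{R}^d}|y|\,\mathbb{Q}(s)(dy)\Big),
\]
and for a general $\mathbb{Q}\in\mathcal{P}(\Omega)$ the moment $\int|y|\,\mathbb{Q}(s)(dy)$ need not be finite, let alone time-integrable. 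Your parenthetical ``automatic for the measures produced in the fixed-point scheme'' is circular here (you need the lemma to launch the scheme), and ``the bounded-domain structure makes Novikov directly applicable'' conflates the confinement of $X_\cdot$ under $\mathbb{P}$ with a moment bound on the \emph{input} measure $\mathbb{Q}$, which is unrelated.

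The paper closes this gap with a short trick you did not use: invoke Assumption~\ref{assump:beta_lip} (Lipschitz in $d_{TV}$) to compare with the reference marginal $\mathbb{P}(t)$,
\[
|\beta(t,X_\cdot,\mathbb{Q}(t),u_t)|\le C\big(d_{TV}(\mathbb{Q}(t),\mathbb{P}(t))+|\beta(t,X_\cdot,\mathbb{P}(t),u_t)|\big).
\]
Now $d_{TV}$ between probability measures is uniformly bounded, and $\mathbb{P}(t)$ is supported on $\bar{\mathcal{D}}$ so $\int|y|\,\mathbb{P}(t)(dy)\le R$. Combined with Assumption~\ref{assump:beta_lin} applied at $\mu=\mathbb{P}(t)$ and the $\mathbb{P}$-a.s.\ bound $|X|_T\le R$, this yields a single deterministic constant $\bar{C}$ bounding $|\beta|$ for \emph{every} $\mathbb{Q}\in\mathcal{P}(\Omega)$, and Novikov follows. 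Inserting this one line fixes your argument completely.
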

\begin{proof}
Assume that $\varphi_\cdot$ is a process such that $\mathbb{P}^\varphi$, defined 
by $d\mathbb{P}^\varphi = L_t^\varphi d\mathbb{P}$ on $\mathbb{F}_t$ where 
$L^\varphi_t := \mathcal{E}_t(\int_0^\cdot \varphi_s dB_s)$, is a probability 
measure on $\Omega$. By Girsanov's theorem, the coordinate process under 
$\mathbb{P}^\varphi$ satisfies
\begin{equation}
    dX_t = \left(\sigma(X_t)\varphi_t + a(X_t)\right)dt + 
\sigma(X_t)dB^\varphi_t,
\end{equation}
where $B^\varphi_\cdot$ is a $\mathbb{P}^\varphi$-Brownian motion. 
$C^2$-smoothness of the boundary $\Gamma$ grants a bounded orthogonal projection 
on $\Gamma$'s tangent space and a bounded mean curvature of $\Gamma$. By the 
Burkholder-Davis-Gundy inequality we have for $1 \leq p < \infty$
\begin{equation}
\label{eq:crucial_bound}
\begin{aligned}
    E^\varphi\left[|X|_T^p\right] 
    &\leq 
    E^\varphi\Bigg[C\Bigg(|X_0|^p + \int_0^T |\sigma(X_s)\varphi_s|^p ds 
    + 
\int_0^T
|a(X_s)|^p ds 
    \\
    &\hspace{1cm}
    + \left|\int_0^\cdot \sigma(X_s)dB^\varphi_s \right|_T^p \Bigg)\Bigg]
    \\
    &\leq
    C\left(1 + \int_0^TE^\varphi[|\varphi_s|^p]ds\right),
\end{aligned}
\end{equation}
where $E^\varphi$ denotes expectation taken under $\mathbb{P}^\varphi$. 
By Assumption~\ref{assump:beta_lip} it holds for every $t\in[0,T]$, $\omega \in 
\Omega$, $\mu\in\mathcal{P}(\mathbb{R}^d)$, and $u\in U$ that
\begin{equation}
\label{eq:aux_lemma1}
\begin{aligned}
    \left|\beta\left(t, \omega, \mu, u\right)\right| 
    &\leq 
C\Big(d_{TV}(\mu,\mathbb{P}(t)) + 
\left|\beta\left(t,\omega,\mathbb{P}(t),u\right)\right|\Big).
\end{aligned}
\end{equation}
In view of \eqref{eq:aux_lemma1}, 
Assumption~\ref{assump:beta_lin} and \ref{assump:beta_lip}, and the fact that the total
variation between two probability measures is uniformly bounded, we have
for all $t\in[0,T]$, 
\begin{equation}
 \label{eq:betaineqality0}
 \begin{aligned}
  |\beta(t,X_\cdot, \mathbb{Q}(t), u_t)| 
  &\leq 
  C\left(d_{TV}(\mathbb{Q}(t), \mathbb{P}(t)) 
  + |\beta(t, X_\cdot, \mathbb{P}(t), u_t)|\right)
  \\
  & \leq C\left(1 + |X|_T 
  + \int_{\mathbb{R}^d} |y|\mathbb{P}(t)(dy)\right) 
  \\
  &\leq C\left(\sup\{ |y|: y\in\bar{\mathcal{D}}\}\right) 
  =: \bar{C} < \infty, \quad \mathbb{P}\text{-a.s.}
 \end{aligned}
\end{equation}
The third inequality of \eqref{eq:betaineqality0} holds $\mathbb{P}$-a.s. since 
under $\mathbb{P}$, 
$X_\cdot \in C\left([0,T]; \bar{\mathcal{D}}\right)$ almost surely.
We note that \eqref{eq:betaineqality0} implies that Novikov's condition is satisfied,
\begin{equation}
  E\left[\exp\left(\frac{1}{2}\int_0^T\sup_{s\in[0,T]}|\beta(s,X_\cdot, \mathbb{Q}(s), u_s)|^2dt 
  \right) \right] 
  \leq 
  E\left[\exp\left(\frac{T\bar{C}^2}{2}\right) \right] 
  < 
  \infty,
\end{equation}
where $E$ denotes expectation with respect to $\mathbb{P}$.
Hence the Doléans-Dade exponential defined 
in \eqref{eq:likelihood-def} is an $(\mathcal{F}_t, \mathbb{P})$-martingale and
$\mathbb{P}^{u,\mathbb{Q}}$ is indeed a probability measure, 
i.e. $\mathbb{P}^{u,\mathbb{Q}}\in\mathcal{P}(\Omega)$.
To show that $\mathbb{P}^{u,\mathbb{Q}}\in\mathcal{P}_p(\Omega)$ for any $p\in[1,\infty)$, 
we simply note that
\begin{equation}
\label{eq:girsanov_change_}
\begin{aligned}
       &E^{u,\mathbb{Q}}\left[|X|_T^p\right] 
       \\
       &=
       E^{u,\mathbb{Q}}\left[|X|_T^p
       \left(1_{\{X_\cdot \in C([0,T]; \bar{\mathcal{D}})\}}
       + 
       1_{\{X_\cdot \notin C([0,T]; \bar{\mathcal{D}})\}}
       \right)\right] 
       \\
       &=
       \mathbb{E}\left[L^{u,\mathbb{Q}}_T|X|_T^p
       \left(1_{\{X_\cdot \in C([0,T]; \bar{\mathcal{D}})\}}
       + 
       1_{\{X_\cdot \notin C([0,T]; \bar{\mathcal{D}})\}}
       \right)\right] 
       \\
       &\leq
       \sup\{|y|^p: y\in\bar{\mathcal{D}}\}
       \mathbb{E}\left[L^{u,\mathbb{Q}}_T1_{\{X_\cdot \in C([0,T]; \bar{\mathcal{D}})\}}\right]
       \\
       &=
       \sup\{|y|^p: y\in\bar{\mathcal{D}}\}.
\end{aligned}
\end{equation}
Finally, by Girsanov's theorem the coordinate process 
under $\mathbb{P}^{u,\mathbb{Q}}$ satisfies \eqref{eq:lemma_ch_3}.
\end{proof}
For a given $u \in\mathcal{U}$, consider the map
\begin{equation}
    \Phi_u : \mathcal{P}(\Omega) \ni \mathbb{{\mathbb{Q}}} \mapsto 
\mathbb{P}^{u,\mathbb{{\mathbb{Q}}}} \in \mathcal{P}(\Omega),
\end{equation}
such that $d\mathbb{P}^{u,\mathbb{Q}} = L^{u,\mathbb{Q}}_td\mathbb{P}$ on $\mathcal{F}_t$, where $L^{u,\mathbb{Q}}$ is given by 
\eqref{eq:likelihood-def}.
\begin{proposition}
\label{prop:unique_fixed_point}
The map $\Phi_u$ is well-defined and admits a unique fixed point for all $u\in \mathcal{U}$. 
Moreover, for every $p\in [1,\infty)$ the fixed point, denoted $\mathbb{P}^u$, belongs to 
$\mathcal{P}_p(\Omega)$. In particular,
 \begin{equation}
     E^u\left[\left|X\right|_T^p\right] 
     \leq
     \sup_{y\in\bar{\mathcal{D}}} |y|^p,
 \end{equation}
where $E^u$ denotes expectation with respect to $\mathbb{P}^u$.
\end{proposition}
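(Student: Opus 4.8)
The plan is to obtain $\mathbb{P}^u$ as the unique fixed point of $\Phi_u$ by a Banach-type argument on the complete metric space $(\mathcal{P}(\Omega), D_T)$. Well-definedness is immediate from Lemma~\ref{eq:lemma_1}: for each $\mathbb{Q}\in\mathcal{P}(\Omega)$, $\Phi_u(\mathbb{Q}) = \mathbb{P}^{u,\mathbb{Q}}$ is a genuine probability measure, so $\Phi_u$ maps $\mathcal{P}(\Omega)$ into itself. What remains is a stability estimate for $\Phi_u$ in the metric $D_t$, which I would then iterate.

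For the stability estimate, fix $\mathbb{Q}, \mathbb{Q}' \in \mathcal{P}(\Omega)$ and abbreviate $\beta_{\mathbb{Q}}(s) := \beta(s, X_\cdot, \mathbb{Q}(s), u_s)$. Both $\mathbb{P}^{u,\mathbb{Q}}$ and $\mathbb{P}^{u,\mathbb{Q}'}$ are equivalent to $\mathbb{P}$ since the Dol\'eans-Dade densities are strictly positive, so I may apply the Csisz\'ar-Kullback-Pinsker inequality \eqref{eq:ckp_ineq} on $\mathcal{F}_t$ to control $D_t^2(\Phi_u(\mathbb{Q}), \Phi_u(\mathbb{Q}'))$ by twice the relative entropy $E^{u,\mathbb{Q}}[\log(d\mathbb{P}^{u,\mathbb{Q}}/d\mathbb{P}^{u,\mathbb{Q}'})]$. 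Writing out the log-ratio of the two exponentials and substituting $dB_s = dB_s^{\mathbb{Q}} + \beta_{\mathbb{Q}}(s)\,ds$ (the decomposition of $B$ under $\mathbb{P}^{u,\mathbb{Q}}$ from Lemma~\ref{eq:lemma_1}), the drift contributions combine into the perfect square $\frac{1}{2}|\beta_{\mathbb{Q}}(s) - \beta_{\mathbb{Q}'}(s)|^2$, while the leftover stochastic integral against $B^{\mathbb{Q}}$ is a true $\mathbb{P}^{u,\mathbb{Q}}$-martingale — its integrand is bounded in view of the uniform bound $\bar{C}$ from \eqref{eq:betaineqality0} — and therefore has zero mean. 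Combining this with Assumption~\ref{assump:beta_lip} and the inequality $d_{TV}(\mathbb{Q}(s), \mathbb{Q}'(s)) \leq D_s(\mathbb{Q}, \mathbb{Q}')$ from Section~\ref{sec:bg-metrics} yields
\begin{equation}
\label{eq:plan_stability}
D_t^2\big(\Phi_u(\mathbb{Q}), \Phi_u(\mathbb{Q}')\big) \leq E^{u,\mathbb{Q}}\left[\int_0^t |\beta_{\mathbb{Q}}(s) - \beta_{\mathbb{Q}'}(s)|^2\,ds\right] \leq C^2\int_0^t D_s^2(\mathbb{Q}, \mathbb{Q}')\,ds.
\end{equation}

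I would then iterate \eqref{eq:plan_stability}. Writing $\Phi := \Phi_u$ and using that $s\mapsto D_s$ is nondecreasing and bounded above by $D_T$, an induction on $n$ gives $D_T^2(\Phi^n(\mathbb{Q}), \Phi^n(\mathbb{Q}')) \leq \frac{(C^2T)^n}{n!}D_T^2(\mathbb{Q},\mathbb{Q}')$. Since $(C^2T)^n/n! \to 0$, for $n$ large enough $\Phi^n$ is a strict contraction on the complete space $(\mathcal{P}(\Omega), D_T)$, and the Banach fixed-point theorem provides a unique fixed point of $\Phi^n$; the usual argument (any fixed point of $\Phi$ is fixed by $\Phi^n$, and $\Phi$ maps the fixed point of $\Phi^n$ to itself) then shows $\Phi$ has a unique fixed point $\mathbb{P}^u$.

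For the final claim, the moment bound is inherited directly from Lemma~\ref{eq:lemma_1}: setting $\mathbb{Q} = \mathbb{P}^u$ in \eqref{eq:girsanov_change_} and using $\mathbb{P}^u = \Phi_u(\mathbb{P}^u) = \mathbb{P}^{u,\mathbb{P}^u}$ gives $\mathbb{P}^u\in\mathcal{P}_p(\Omega)$ together with $E^u[|X|_T^p]\leq \sup_{y\in\bar{\mathcal{D}}}|y|^p$ for every $p\in[1,\infty)$. I expect the main obstacle to be the stability estimate \eqref{eq:plan_stability}, in particular the passage from $D_t$ to relative entropy via the Csisz\'ar-Kullback-Pinsker inequality and the verification that the martingale term vanishes, which hinges on the almost-sure boundedness of $\beta$ recorded in \eqref{eq:betaineqality0} (itself a consequence of the coordinate process remaining in $\bar{\mathcal{D}}$).
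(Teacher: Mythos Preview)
Your proposal is correct and follows essentially the same route as the paper: well-definedness via Lemma~\ref{eq:lemma_1}, a stability estimate obtained from the Csisz\'ar--Kullback--Pinsker inequality combined with the explicit log-ratio of the two Dol\'eans--Dade densities and Assumption~\ref{assump:beta_lip}, then iteration to make some power of $\Phi_u$ a contraction, and finally the moment bound read off from \eqref{eq:girsanov_change_}. Your write-up is in fact slightly more explicit than the paper's about why the stochastic integral term vanishes (boundedness of $\beta$ from \eqref{eq:betaineqality0}), which is a good thing.
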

\begin{proof}
By Lemma~\ref{eq:lemma_1}, the mapping is well defined. We first show the contraction 
property of the map $\Phi_u$ in the complete metric space $\mathcal{P}(\Omega)$, 
endowed with the total variation distance $D_T$. The proof is an adaptation of 
the proof of \cite[Thm. 8]{choutri2016optimal}. For each $t\in[0,T]$, let 
$\beta^{\mathbb{Q}}_t := \beta(t, X_\cdot, \mathbb{Q}(t), u_t)$. Given  
${\mathbb{Q}},\widetilde{{\mathbb{Q}}}\in\mathcal{P}(\Omega)$, the 
Csisz\'ar-Kullback-Pinsker inequality \eqref{eq:ckp_ineq} and the fact that 
$\int_0^\cdot (dB_s - \beta^{\mathbb{Q}}_sds)$ is a martingale under 
$\Phi_u({\mathbb{Q}}) = \mathbb{P}^{u,\mathbb{Q}}$ yields
\begin{equation}
\label{eq:fixed_point_estimate}
 \begin{aligned}
  &D_T^2\left(\Phi_u({\mathbb{Q}}), \Phi_u(\widetilde{{\mathbb{Q}}})\right)
  \leq
2E^{u,\mathbb{Q}}\left[\log\left(L^{u,\mathbb{Q}}_T/L^{u,\widetilde{{\mathbb{Q
}}}}_T\right)\right]
  \\
  &= 
  2E^{u,\mathbb{Q}}\left[
  \int_0^T 
  \left(\beta^{\mathbb{Q}}_s - \beta^{\widetilde{{\mathbb{Q}}}}_s\right)dB_s
  -  \frac{1}{2} \int_0^T \Big(\beta^{\mathbb{Q}}_s\Big)^2 - 
\Big(\beta^{\widetilde{{\mathbb{Q}}}}_s\Big)^2 ds\right]
  \\
  &= 
  2E^{u,\mathbb{Q}}\left[  \int_0^T \left(\beta^{\mathbb{Q}}_s - 
    \beta^{\widetilde{{\mathbb{Q}}}}_s\right)\beta^{\mathbb{Q}}_s - 
\frac{1}{2}\Big(\beta^{\mathbb{Q}}_s\Big)^2 + 
\frac{1}{2}\Big(\beta^{\widetilde{{\mathbb{Q}}}}_s\Big)^2 ds \right]
  \\
  &=
  \int_0^T \mathbb{E}^{u,\mathbb{Q}}\left[\left(\beta^{\mathbb{Q}}_s - 
\beta^{\widetilde{{\mathbb{Q}}}}_s\right)^2\right]ds
  \\
  &\leq C\int_0^T d_{TV}^2\left({\mathbb{Q}}(s), 
\widetilde{{\mathbb{Q}}}(s)\right)ds
  \leq 
  C \int_0^T D_s^2\left({\mathbb{Q}},\widetilde{{\mathbb{Q}}}\right)ds.
 \end{aligned}
\end{equation}
Iterating the inequality, we obtain for every $N \in\mathbb{N}$,
\begin{equation}
  D^2_T\left(\Phi_u^N({\mathbb{Q}}), \Phi_u^N(\widetilde{{\mathbb{Q}}})\right) 
\leq 
\frac{C^NT^N}{N!}D^2_T\left({\mathbb{Q}},\widetilde{{\mathbb{Q}}}\right),
\end{equation}
where $\Phi_u^N$ denotes the $N$-fold composition of $\Phi_u$. Hence 
$\Phi_u^N$ is a contraction for $N$ large enough, thus admitting a unique fixed 
point, which is also the unique fixed point for $\Phi_u$.
Under $\mathbb{P}^u$, the fixed point of $\Phi_u$, 
the coordinate process satisfies
\begin{equation}
    dX_t = \left(\sigma(X_t)\beta\left(t, X_\cdot, \mathbb{P}^u(t), u_t\right) + 
a(X_t)\right)dt + \sigma(X_t)dB^u_t,
\end{equation}
where $B^u$ is a $\mathbb{P}^u$-Brownian motion. 
Following the calculations from Lemma~\ref{eq:lemma_1} that lead to 
\eqref{eq:girsanov_change_}, we get the estimate
\begin{equation}
    (\left\| \mathbb{P}^u\right\|_p)^p = E^u\left[\left|X\right|_T^p\right] \leq 
    \sup_{y\in\bar{\mathcal{D}}}|y|^p,
\end{equation}
where $p\in[1,\infty)$.
\end{proof}

From now on, we will denote the Brownian motion corresponding to $\mathbb{P}^u$ 
by $B^u$. To summarize this section, we have proved the following result 
under Assumption~\ref{assump:beta_meas}-\ref{assump:beta_lip}.
\begin{theorem}
\label{thm:EU-mfeq}
Given $u \in \mathcal{U}$, there exists a unique weak solution to the sticky 
reflected SDE of mean-field type with boundary diffusion
 \begin{equation}
 \label{eq:final_eq_ch_3}
     dX_t = \left(\sigma(X_t)\beta\left(t,X_\cdot, \mathbb{P}^u(t),u_t\right) + 
a(X_t)\right)dt + \sigma(X_t)dB^u_t.
 \end{equation}
 Under $\mathbb{P}^u$ the $t$-marginal distribution of $X_\cdot$ is 
$\mathbb{P}^u(t)$ for $t\in[0,T]$ and $X_\cdot$ is almost surely $C([0,T]; 
\bar{\mathcal{D}})$-valued. Furthermore, $\mathbb{P}^u\in\mathcal{P}_p(\Omega)$.
\end{theorem}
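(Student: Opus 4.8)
The plan is to recognize that Theorem~\ref{thm:EU-mfeq} is essentially a repackaging of Lemma~\ref{eq:lemma_1} and Proposition~\ref{prop:unique_fixed_point}; the only genuinely new content is the precise identification of a \emph{weak solution} of the mean-field equation \eqref{eq:final_eq_ch_3} with a \emph{fixed point} of the map $\Phi_u$. First I would fix the notion of weak solution: a measure $\mathbb{Q}\in\mathcal{P}(\Omega)$ is a weak solution if, under $\mathbb{Q}$, the coordinate process satisfies the equation obtained from \eqref{eq:final_eq_ch_3} with $\mathbb{P}^u(t)$ replaced by the self-consistent marginal $\mathbb{Q}(t) = \mathbb{Q}\circ X_t^{-1}$. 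This is the McKean--Vlasov consistency requirement that the law appearing in the coefficients equals the law of the solution.

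For existence I would take $\mathbb{P}^u$ to be the fixed point furnished by Proposition~\ref{prop:unique_fixed_point}. Since $\Phi_u(\mathbb{P}^u) = \mathbb{P}^{u,\mathbb{P}^u} = \mathbb{P}^u$, Lemma~\ref{eq:lemma_1} applied with $\mathbb{Q} = \mathbb{P}^u$ shows that under $\mathbb{P}^u$ the coordinate process solves \eqref{eq:lemma_ch_3} with $\mathbb{Q}(t) = \mathbb{P}^u(t)$, which is precisely \eqref{eq:final_eq_ch_3}; thus $\mathbb{P}^u$ is a weak solution. For uniqueness I would observe that any weak solution $\mathbb{Q}$ satisfies the self-consistency condition, so $\mathbb{Q}$ is a law under which the coordinate process solves the SDE with $\mathbb{Q}(t)$ \emph{frozen} in the coefficients. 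That frozen equation is a non-mean-field SDE with bounded coefficients whose unique weak solution is the Girsanov measure $\mathbb{P}^{u,\mathbb{Q}} = \Phi_u(\mathbb{Q})$ built in Lemma~\ref{eq:lemma_1}; uniqueness in law for it forces $\mathbb{Q} = \Phi_u(\mathbb{Q})$, i.e. $\mathbb{Q}$ is a fixed point. Proposition~\ref{prop:unique_fixed_point} then yields $\mathbb{Q} = \mathbb{P}^u$.

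The three remaining assertions follow by direct inspection. The marginal claim is immediate from the definition $\mathbb{P}^u(t) := \mathbb{P}^u\circ X_t^{-1}$. The $\mathcal{P}_p$-membership and the explicit bound $E^u[|X|_T^p] \leq \sup_{y\in\bar{\mathcal{D}}}|y|^p$ are exactly the conclusion of Proposition~\ref{prop:unique_fixed_point}. For the path-support statement I would use that $\mathbb{P}^u \ll \mathbb{P}$ on each $\mathcal{F}_t$ (the density $L^{u,\mathbb{P}^u}_t$ being a genuine probability density by Lemma~\ref{eq:lemma_1}), together with the fact, recorded after \eqref{eq:abbreviation}, that under $\mathbb{P}$ one has $X_\cdot \in C([0,T];\bar{\mathcal{D}})$ almost surely; absolute continuity transfers this null-set property to $\mathbb{P}^u$.

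I do not expect a substantive obstacle: all the analytic work---the Novikov/martingale argument making $L^{u,\mathbb{Q}}$ a density, the a~priori moment bound \eqref{eq:girsanov_change_}, and the contraction estimate \eqref{eq:fixed_point_estimate} in the total-variation metric $D_T$---has already been carried out. The only point requiring care is phrasing the weak-solution/fixed-point correspondence so that uniqueness of the fixed point genuinely yields uniqueness of the weak solution; concretely, one must invoke uniqueness in law for the frozen-coefficient equation before closing the fixed-point loop, and this is where the reduction to a standard (non-mean-field) SDE driven by Girsanov makes the argument routine.
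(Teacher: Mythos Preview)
Your proposal is correct and follows essentially the same approach as the paper: the theorem is treated as a summary of Lemma~\ref{eq:lemma_1} and Proposition~\ref{prop:unique_fixed_point}, with the only remaining detail being the path-support claim, which both you and the paper obtain by transferring the $\mathbb{P}$-null set $\{X_\cdot\notin C([0,T];\bar{\mathcal{D}})\}$ to $\mathbb{P}^u$ via absolute continuity. Your write-up is in fact more explicit than the paper's on the weak-solution/fixed-point correspondence and the role of uniqueness in law for the frozen-coefficient equation, points the paper leaves implicit.
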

\begin{proof}
 We are left to show that 
 $\mathbb{P}^u\left( X_\cdot\in C([0,T]; \bar{\mathcal{D}}) \right) = 1$,
 all other statements of the theorem have been proved. Let $L^u_T := \mathcal{E}_T(\int_0^\cdot\beta(s, X_\cdot, \mathbb{P}^u(s), u_s)dB_s)$.
 Since $\mathbb{P}(X_\cdot \notin C([0,T]; \bar{\mathcal{D}}) = 0$, 
 \begin{equation}
 \begin{aligned}
    \mathbb{P}^u\left( X_\cdot\notin C([0,T]; \bar{\mathcal{D}}) \right)
    &=
    \mathbb{E}\left[L^u_T 1_{\{X_\cdot\notin C([0,T]; \bar{\mathcal{D}})\}}\right] = 0,
 \end{aligned}
 \end{equation}
 which proves that $X_\cdot$ is $\mathbb{P}^u$-almost surely $C([0,T]; \bar{\mathcal{D}})$-valued.
\end{proof}

 \begin{remark}
 \label{remark:stickyness}
 The drift component $\beta$ is projected in the tangential direction of the 
boundary by $\sigma$ whenever the process is at the boundary (cf. 
\eqref{eq:non_trans_state}). 
 The drift component $a$ is not effected by the transformation. From a modeling 
perspective, the interpretation is that the pedestrian's tangential movement is 
partially controllable but also influenced by other pedestrians through the mean 
field. The normal direction is an uncontrolled delayed reflection.
\end{remark}

\section{Mean-field type optimal control}
\label{sec:control}
Let $E^u$ denote expectation taken under $\mathbb{P}^u$. To apply the stochastic 
maximum principle of \cite{generalSMP}, we make the assumption that the 
mean-field type Girsanov kernel $\beta$ depends linearly on $\mathbb{P}^u$.
\begin{assumption}
\label{assump:law_linear}
Let $\widetilde{\beta} : [0,T]\times\Omega \times \mathbb{R}^d \times U 
\rightarrow \mathbb{R}^d$ 
and let $r_\beta : \mathbb{R}^d \rightarrow \mathbb{R}^d$, and assume that
\begin{equation}
    \beta\left(t, X_\cdot, \mathbb{P}^u(t), u_t\right) = 
\widetilde{\beta}\left(t, X_\cdot, E^u\left[r_\beta(X_t)\right], u_t\right).
  \end{equation}
\end{assumption}
With some abuse of notation, we will continue to denote the Girsanov kernel by 
$\beta$, although from now this refers to $\widetilde{\beta}$. Let $f : [0,T] 
\times \Omega \times \mathbb{R}^d \times U \rightarrow \mathbb{R}$, $g : 
\mathbb{R}^d \times \mathbb{R}^d \rightarrow \mathbb{R}$, $r_f : \mathbb{R}^d 
\rightarrow \mathbb{R}^d$, and  $r_g : \mathbb{R}^d \rightarrow \mathbb{R}^d$.
\begin{assumption}
\label{assump:f_g_meas}
For every $u\in\mathcal{U}$, the process $\left(f(t,X_\cdot, E^u[r_f(X_t)], 
u_t)\right)_t$  is progressively measurable with respect to $\mathbb{F}$ and $(x,y) \mapsto g(x,y)$ is Borel measurable.
\end{assumption}

Consider the finite horizon mean-field type cost functional $J : \mathcal{U} 
\rightarrow \mathbb{R}$,
\begin{equation}
\label{1}
\begin{aligned}
 J(u) := E^u\left[\int_0^T f\left(t, X_\cdot, E^u\left[r_f(X_t)\right], 
u_t\right) dt  + g\left(X_T, E^u\left[r_g(X_T)\right]\right) \right].
\end{aligned}
\end{equation}
The control problem considered in this section is the minimization of $J$ with 
respect to $u\in\mathcal{U}$ under the constraint that the coordinate process 
for any given $u$ satisfies \eqref{eq:final_eq_ch_3}. The integration 
in \eqref{1} is with respect to a measure absolutely continuous with respect to 
$\mathbb{P}$. Changing measure, we get
\begin{equation}
\label{eq:cost_wrt_L}
\begin{aligned}
  J(u) &= E\bigg[ \int_0^T L_t^u f\left(t, X_\cdot, E[L_t^ur_f(X_t)], u_t\right) 
dt \\
&\hspace{3cm} + 
L_T^u g\left(X_T, E[L_T^ur_g(X_T)]\right) \bigg],
\end{aligned}
\end{equation}
where $E$ is the expectation taken under the original probability measure 
$\mathbb{P}$ and  $L^u$ the controlled likelihood process, given by the SDE 
of mean-field type
\begin{equation}
\label{eq:likelihood_dynamics}
    dL_t^u = L_t^u \beta\left(t,X_\cdot, E\left[L_t^ur_\beta(X_t)\right], 
u_t\right)^* dB_t, \quad L_0^u = 1.
\end{equation}

\subsection{Necessary optimality conditions}
After making one final assumption about the regularity of $\beta, f$, and $g$ 
(Assumption~\ref{assump:derivative} below), the stochastic maximum principle 
yields necessary conditions on an optimal control for 
the minimization of \eqref{eq:cost_wrt_L} subject to 
\eqref{eq:likelihood_dynamics}. 
Assumption~\ref{assump:law_linear} and 
\ref{assump:derivative} are stated in their current form for the sake of 
technical, not conceptual, simplicity and may be relaxed.

\begin{assumption}
\label{assump:derivative}
The functions $(t,x,y,u) \mapsto (f,\beta)(t,x,y,u)$ and $(x,y) \mapsto g(x,y)$ 
are twice continuously differentiable with respect to $y$. Moreover, $\beta,f$ 
and $g$ and all their derivatives up to second order with respect to $y$ are 
continuous in $(y,u)$, and bounded.
\end{assumption}
The next result is a slight generalization of \cite[Thm 2.1]{generalSMP}.
The paper \cite{generalSMP} treats 
an optimal control problem of mean-field type with deterministic coefficients.
The approach of \cite{generalSMP}, which goes back to \cite{peng1990general},
extends without any further conditions 
to include random coefficients, as shown in \cite{hosking2012stochastic}.
Moreover, in our case the coefficients are not bounded functions, 
they are linear in the likelihood. This seems to violate the conditions of
\cite[Thm 2.1]{generalSMP} but 
an application of Gr\"{o}nwall's lemma yields $E[(L_t^u)^p] \leq \exp(C(p)t)$ for all
$t\in[0,T]$ and $p\geq 2$, where $C(p)$ is a bounded constant, 
and the estimates of \cite{generalSMP} can be recovered after
an application of H\"{o}lder's inequality.
\begin{theorem}
\label{thm2}
Assume that $(\hat{u}, L^{\hat{u}})$ solves the optimal control problem 
\eqref{eq:cost_wrt_L}-\eqref{eq:likelihood_dynamics}. Then there are two pairs 
of 
$\mathbb{F}$-adapted processes, $(p, q)$ and $(P, 
Q)$, that satisfy the first and second order adjoint equations 
\begin{equation}
\left\{
 \begin{aligned}
  dp_t 
  &= 
  -\Big( q_t\beta^{\hat{u}}_t + 
E\left[q_tL_t^{\hat{u}}\nabla_y\beta^{\hat{u}}_t\right]r_\beta(X_t) 
  \\
  &\hspace{2cm} - 
  f^{\hat{u}}_t - E\left[L^{\hat{u}}_t\nabla_yf^{\hat{u}}_t\right]r_f(X_t)\Big) dt
    + q_tdB_t,
  \\
  p_T 
  &=
  -g^{\hat{u}}_T - E\left[L_T^{\hat{u}}\nabla_yg^{\hat{u}}_T\right]r_g(X_T),
  \\
  \end{aligned}
  \right.
  \end{equation}
  \begin{equation}
  \left\{
  \begin{aligned}
  dP_t 
  &=
  - \Big( \left|\beta^{\hat{u}}_t + 
E\left[L^{\hat{u}}_t\nabla_y\beta^{\hat{u}}_t\right]r_\beta(X_t)\right|^2P_t 
  \\
  &\hspace{2cm}
  + 2Q_t\left(\beta^{\hat{u}}_t + E\left[L^{\hat{u}}_t\nabla_y 
\beta^{\hat{u}}_t\right]r_\beta(X_t)\right)\Big)dt + Q_t dB_t,
  \\
  P_T 
  &= 0,
  \end{aligned}
  \right.
\end{equation}
where $\nabla_y$ denotes differentiation with respect to the 
$\mathbb{R}^d$-valued argument.
Furthermore, $(p,q)$ and $(P, Q)$ satisfy
\begin{equation}
  E\left[\sup_{t\in[0,T]}|p_t|^2 + \int_0^T |q_t|^2dt\right] < \infty,
  \ \
  E\left[ \sup_{t\in[0,T]}|P_t|^2 + \int_0^T |Q_t|^2 dt\right] < \infty,
\end{equation}
and for every $u\in U$ and a.e. $t\in[0,T]$, it holds $\mathbb{P}$-a.s. that 
\begin{equation}
\label{eq:opt_cond}
  \mathcal{H}\left(L^{\hat{u}}_t, u, p_t, q_t\right) - 
\mathcal{H}\left(L^{\hat{u}}_t, \hat{u}_t, p_t, 
q_t\right) 
+
\frac{1}{2}\left[\delta \left(L\beta\right)(t)\right]^TP_t\left[\delta 
\left(L\beta\right)(t)\right] \leq 0,
\end{equation}
where
  $\mathcal{H}(L_t^u, u_t, p_t, q_t) \hspace{-2pt}
  := \hspace{-2pt}
  L_t^u\beta^u_tq_t \hspace{-2pt}
  - \hspace{-2pt}
  L_t^uf^u_t$
  and
\begin{equation}
  \delta (L\beta)(t) 
  := 
  L^{\hat{u}}_t\left(\beta\left(t,X_\cdot, 
  E[L^{\hat{u}}_t r_\beta(X_t)], u\right) - \beta^{\hat{u}}_t\right).
\end{equation}
\end{theorem}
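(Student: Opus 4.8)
The plan is to follow the spike-variation (needle) approach of Peng \cite{peng1990general}, adapted to the mean-field setting as in \cite{generalSMP}, the departures from that reference being the two technical points addressed below. Because the control $u$ enters the diffusion coefficient $L^u_t\beta^*$ of the state equation \eqref{eq:likelihood_dynamics} and not its (vanishing) drift, a first-order variation alone does not close the expansion of the cost, and a second-order adjoint pair $(P,Q)$ is forced upon us in addition to the first-order pair $(p,q)$. Concretely, I would fix $u\in U$, a Lebesgue point $\tau\in[0,T)$, and $\varepsilon>0$, and define the spike control $u^\varepsilon_t := u$ on $[\tau,\tau+\varepsilon]$ and $u^\varepsilon_t := \hat u_t$ elsewhere. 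Letting $L^\varepsilon$ denote the corresponding likelihood, I expand $L^\varepsilon - L^{\hat u} = Y^1 + Y^2 + o_{L^2}(\varepsilon)$, where the first- and second-order variation processes $Y^1,Y^2$ solve the linear SDEs obtained by differentiating \eqref{eq:likelihood_dynamics} once and twice. The mean-field dependence through $E[L^u_t r_\beta(X_t)]$ contributes the linear-functional sensitivity $E[Y^1_t r_\beta(X_t)]$, and this is exactly where the $\nabla_y$-derivatives and the outer expectations appearing in the adjoint equations originate.

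The next step is to insert this expansion into the optimality inequality $J(u^\varepsilon)-J(\hat u)\geq 0$ and Taylor-expand the cost \eqref{eq:cost_wrt_L} to second order in $\varepsilon$, again tracking the mean-field terms $E[L^u_t r_f(X_t)]$ and $E[L^u_T r_g(X_T)]$ through their first-order sensitivities. To eliminate $Y^1,Y^2$ from the resulting inequality I would introduce the adjoints by duality: apply It\^o's formula to $p_tY^1_t$, to $p_tY^2_t$, and to $P_t(Y^1_t)^2$, and choose the drivers and terminal data of the backward equations for $(p,q)$ and $(P,Q)$ so that every term containing $Y^1$ or $Y^2$ cancels. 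This forces precisely the two linear mean-field BSDEs stated in the theorem, and the claimed square-integrability of $(p,q)$ and $(P,Q)$ follows from standard well-posedness of linear BSDEs once the coefficients are shown to be integrable (see below). Dividing by $\varepsilon$, letting $\varepsilon\downarrow 0$, and using that $\tau$ is an arbitrary Lebesgue point yields the pointwise variational inequality, which upon collecting terms is exactly the Hamiltonian inequality \eqref{eq:opt_cond} with $\mathcal H$ and $\delta(L\beta)$ as defined.

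Two obstacles separate this from a verbatim citation of \cite{generalSMP}. First, the coefficients $\beta,f,g$ are random (they depend on the path $X_\cdot$), whereas \cite{generalSMP} assumes deterministic coefficients; but, as noted there and carried out in \cite{hosking2012stochastic}, Peng's scheme extends to progressively measurable random coefficients with no extra hypotheses, so this is routine. The genuine difficulty, and the step I expect to be the crux, is that the state equation \eqref{eq:likelihood_dynamics} has coefficients that are \emph{unbounded}, being linear in the state $L^u$: the running and terminal costs $L^u_tf$ and $L^u_Tg$ and the diffusion $L^u_t\beta$ all grow linearly in $L^u$, so the standard estimates on the variation processes and the duality identities, stated for suitably bounded coefficients, do not apply directly. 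The remedy is the moment bound: Gr\"onwall's lemma applied to \eqref{eq:likelihood_dynamics}, using that $\beta$ is bounded on the support of $\mathbb P$ (cf. \eqref{eq:betaineqality0}), gives $E[(L^u_t)^p]\leq \exp(C(p)t)$ for every $p\geq 2$ with $C(p)$ finite, and a H\"older splitting of each product then reduces every estimate to the bounded-coefficient case of \cite{generalSMP}. Verifying that these $L^p$-controls are uniform enough to justify the $o(\varepsilon)$ remainders in the expansions of $L^\varepsilon$ and of the cost is the main place where care is required.
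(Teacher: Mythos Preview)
Your proposal is correct and matches the paper's own argument essentially verbatim: the paper does not give a self-contained proof but instead invokes \cite[Thm.~2.1]{generalSMP} (the Peng spike-variation scheme in the mean-field setting), noting exactly the two departures you identify---random coefficients, handled by \cite{hosking2012stochastic}, and the linear-in-$L^u$ unboundedness, handled by the Gr\"onwall bound $E[(L^u_t)^p]\leq\exp(C(p)t)$ combined with H\"older's inequality to recover the estimates of \cite{generalSMP}. If anything, your outline of the spike-variation mechanics is more detailed than what the paper provides.
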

The following local form of the optimality condition \eqref{eq:opt_cond}
can be found in e.g. \cite[pp. 120]{yong1999stochastic}, and will be useful
for computation in Section~\ref{sec:ex}.
If $U$ is a convex set and $\mathcal{H}$ is differentiable with respect to $u$, 
then \eqref{eq:opt_cond} implies 
\begin{equation}
 \label{eq:opt_cond_convex}
 (u-\hat{u}_t)^*\nabla_u\mathcal{H}\left(L^{\hat{u}}_t, \hat{u}_t, p_t, q_t\right)
  \leq 0,\quad \forall\ u\in U,\ \text{a.e. }t\in[0,T],\ \mathbb{P}\mbox{-a.s}.
 \end{equation}

\begin{remark}
Sufficient conditions for weak optimal controls will seldom be satisfied since 
they typically require the Hamiltonian to be convex (or concave) in at least 
state ($L^u_t$) and control ($u_t$). This is false even for the simplest version 
of our problem. Assume that $\beta(t, \omega, y,u) = u$ and $f = 0$, then 
$(\ell,u) \mapsto \mathcal{H}(\ell, u, p, q) = \ell u q$, which is neither 
convex nor concave. However, necessary optimality conditions can be useful as we 
will see in Section~\ref{sec:ex}.
\end{remark}

\subsection{Microscopic interpretation of the mean-field type control problem}
\label{sec:micro}
In this section, we give a microscopic interpretation of the mean-field type control problem \eqref{eq:intro_opt} in the form of an interacting particle 
system (collaboratively) minimizing the social cost.
Our means will be the propagation of chaos result \cite[Thm. 2.6]{lacker2018strong}. We will work under all the assumptions stated so far, but we will use the notation from Section~\ref{sec:model} for $\beta$, $f$, and $g$.

We will fix a closed-loop control and we will assume that all the interacting particles are using this control. This assumption is made in order to extract the approximating property of any solution to the mean-field optimal control problem that is on closed-loop form. In Section~\ref{sec:ex}, we will see examples of such controls.

We introduce an interacting system of sticky reflected SDEs with boundary diffusion. Each equation has an initial value with distribution $\lambda$, where $\lambda$ is a nonatomic measure and $\lambda(\bar{\mathcal{D}}) = 1$. See Remark 10 in \cite{lacker2018strong} for the necessity of the random initial condition.

Consider the measure $\mathbb{P}^{\otimes N}$ on $(\Omega^N,\mathcal{B}(\Omega^N))$, 
the weak solution to a system of $N\in\mathbb{N}$ i.i.d. sticky reflected Brownian motions with boundary diffusion
\begin{equation}
\label{eq:particlesys1}
     dX_t^{N,i} = a(X_t^{N,i})dt + \sigma(X_t^{N,i})dB_t^{i},
     \quad
     X_0^{N,i} = \xi^{N,i}, \quad i=1,\dots, N,
\end{equation}
where $\xi_1,\dots, \xi_N$ are i.i.d. random variables with law $\lambda$ which has support only on $\bar{\mathcal{D}}$, and such that $B^1,\dots,B^N$ are independent $\mathbb{F}$-Wiener processes.
 The functions $a$ and $\sigma$ are defined as in \eqref{eq:abbreviation}. Given controls $u^i\in\mathcal{U}$ (now $\mathbb{F}$-progressively measurable), $i=1,2,\dots$, 
 define the likelihood process $L_{\mathbf{u},t}^{N,i}$ as the solution to
\begin{equation}
    dL^{N,i}_{\mathbf{u},t} 
    =
    L^{N,i}_{\mathbf{u},t}\beta\left(t,X_\cdot^{N,i},\mu^{N}_t, u^i_t\right)^*dB^{i}_t,
    \quad
    L^{N,i}_{\mathbf{u},0}
    = 
    1,\quad i=1,\dots, N,
\end{equation}
where
$\mu^N$ is the empirical measure of the coordinate processes,
\begin{equation*}
     \mu^N := \frac{1}{N}\sum_{i=1}^N\delta_{X^i_\cdot} \in \mathcal{P}(\Omega).
\end{equation*} 
Then $L^N_{\mathbf{u},t} := \prod_{i=1}^N L_{\mathbf{u},t}^{N,i}$ is the Radon-Nikodym derivative for the Girsanov-type change of measure from $\mathbb{P}^{\otimes N}$ to $\mathbb{P}^{N,\mathbf{u}}$, under which the coordinate processes satisfy
\begin{equation}
\label{eq:transformed_partsys}
\left\{
\begin{aligned}
        dX_t^{N,i} &= \left(a(X_t^{N,i})+\sigma(X_t^{N,i})\beta\left(t,X_\cdot^{N,i}, 
        \mu^{N}_t, u^i_t\right)\right)dt + \sigma(X_t^{N,i})d\widetilde{B}_t^{i},
        \\ 
        X_0^{N,i} &= \xi^{N,i}, \quad i=1,\dots, N,
\end{aligned}
\right.
\end{equation}
where $\widetilde{B}^{1},\dots$ are $\mathbb{P}^{N,\mathbf{u}}$-Brownian motions and $\mathbf{u} := (u^1,\dots, u^N)$. 
We note that $\mathbb{P}^{N, \mathbf{u}}$ is the law of a system of interacting diffusion processes.
The social cost of the system \eqref{eq:transformed_partsys} is defined as
\begin{equation}
    \frac{1}{N}\sum_{i=1}^N J^i(\mathbf{u})
    :=
    \frac{1}{N}\sum_{i=1}^N 
    E^{N,\mathbf{u}}\left[
    \int_0^T
    f(t, X^{N,i}_\cdot, \mu^N_t, u^i_t)dt + g(X_T^i, \mu^N_T)
    \right].
\end{equation}
The following theorem is an adaptation of 
\cite[Thm. 2.6]{lacker2018strong} where the drift $b := a + \sigma\beta$ and the Girsanov kernel $\sigma^{-1}b := \beta$.
\begin{theorem}
\label{thm:convergence}
    Let $u\in\mathcal{U}$ be a closed-loop control, i.e.
    $u_t(\omega) = \varphi(\omega_{\cdot\wedge t})$ for some measurable function $\varphi : (\Omega,\mathcal{F}) \rightarrow (U,\mathcal{B}(U))$. Given the control $u$ and a random variable $\xi$ with law $\lambda$ 
    (nonatomic with support only on $\bar{\mathcal{D}}$), 
    the sticky reflected SDE of mean-field type with boundary diffusion
    \begin{equation}
    \label{eq:thm3}
    \left\{
    \begin{aligned}
        dX_t &= \left(a(X_t) + \sigma(X_t)\beta(t, X_\cdot, \mathbb{P}^u(t), \varphi(X_{\cdot\wedge t}))\right)dt + \sigma(X_t)dB_t,
        \\
        X_0 &= \xi,
    \end{aligned}
    \right.
    \end{equation}
    can be approximated by the interacting particle system \eqref{eq:transformed_partsys} with all components using the fixed closed-loop control $u$. Furthermore, the value of the mean-field cost functional $J$ at $u$ is the asymptotic social cost of the interacting particle system as $N\rightarrow\infty$ when all the $X^{N,i}$s are using the fixed control $u$. More specifically,
    \begin{equation}
    \label{eq:poc}
    \lim_{N\rightarrow\infty}D_{T}\left(\mathbb{P}^{N,\mathbf{u}}\circ(X^{N,1}_\cdot,\dots, X^{N,k}_\cdot)^{-1},  (\mathbb{P}^u\circ X_\cdot^{-1})^{\otimes k}\right) = 0,
    \end{equation}
    with $\mathbf{u} = (u,\dots, u)$, and
    \begin{equation}
    \lim_{N\rightarrow\infty}\frac{1}{N}\sum_{i=1}^NJ^i(u,\dots, u)
    \rightarrow J(u).
    \end{equation}
\end{theorem}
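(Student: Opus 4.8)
The plan is to recognize \eqref{eq:transformed_partsys} and \eqref{eq:thm3} as, respectively, an $N$-particle McKean--Vlasov system and its mean-field limit, both built from the common reference law $\mathbb{P}^{\otimes N}$ of \eqref{eq:particlesys1} by a Girsanov change of measure whose kernel is $\beta$, and then to invoke the entropic propagation-of-chaos estimate of \cite[Thm.~2.6]{lacker2018strong}. The crucial structural observation is that fixing the closed-loop control $u_t = \varphi(X_{\cdot\wedge t})$ turns the drift $a(X^{N,i}_t) + \sigma(X^{N,i}_t)\beta(t, X^{N,i}_\cdot, \mu^N_t, \varphi(X^{N,i}_{\cdot\wedge t}))$ of particle $i$ into a progressively measurable functional of that particle's own path and of the empirical measure $\mu^N$ alone. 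The controlled system \eqref{eq:transformed_partsys} is therefore a genuine interacting diffusion of exactly the type treated in \cite{lacker2018strong}, with reference measure $\mathbb{P}$ and Girsanov kernel $\beta$, and the fixed-control assumption is precisely what is needed to place us in that framework.

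The first step is to record well-posedness of the two laws appearing in \eqref{eq:poc}. The reference $\mathbb{P}^{\otimes N}$ is the product of $N$ copies of the weak solution of the uncontrolled sticky reflected SDE with boundary diffusion; its existence and uniqueness follow from \cite[Thm. 3.9 \& 3.17]{grothaus2017stochastic} exactly as in Section~\ref{sec:model}, now with the i.i.d.\ initial law $\lambda$ supported on $\bar{\mathcal{D}}$ in place of the deterministic $x_0$. The mean-field law $\mathbb{P}^u$ solving \eqref{eq:thm3} is obtained by re-running the fixed-point argument of Proposition~\ref{prop:unique_fixed_point}: the uniform bound \eqref{eq:betaineqality0} rests only on $X_\cdot\in C([0,T];\bar{\mathcal{D}})$ a.s., and the contraction estimate \eqref{eq:fixed_point_estimate} uses only the total-variation Lipschitz property of $\beta$, so neither argument is affected by replacing the deterministic initial point by $\xi\sim\lambda$; a unique $\mathbb{P}^u\in\mathcal{P}_p(\Omega)$ is produced verbatim.

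The second step is to verify the hypotheses under which \cite[Thm. 2.6]{lacker2018strong} yields relative-entropy propagation of chaos. These reduce to: well-posedness of the reference martingale problem, just recalled; progressive measurability of the kernel (Assumption~\ref{assump:beta_meas}); a uniform $\mathbb{P}$-a.s.\ bound on $\beta$, supplied by \eqref{eq:betaineqality0} because the coordinate process never leaves $\bar{\mathcal{D}}$; and total-variation Lipschitz dependence on the measure argument (Assumption~\ref{assump:beta_lip}). With these in hand the theorem gives that the relative entropy of the law of $(X^{N,1}_\cdot,\dots,X^{N,k}_\cdot)$ under $\mathbb{P}^{N,\mathbf{u}}$ relative to $(\mathbb{P}^u\circ X_\cdot^{-1})^{\otimes k}$ tends to zero, and the Csisz\'ar--Kullback--Pinsker inequality \eqref{eq:ckp_ineq} converts this into the total-variation statement \eqref{eq:poc}. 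It is worth stressing here that the degeneracy of $\sigma$ on $\Gamma$ and the sticky/reflecting nature of the reference are transparent to this argument: the change of measure is driven by the full kernel $\beta$ against the $d$-dimensional Brownian motion $B$, and \cite{lacker2018strong} uses the reference only through its well-posedness and the kernel only through its boundedness and regularity.

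Finally, for the convergence of the social cost, exchangeability (i.i.d.\ initial data and the common control $u$) reduces $\frac1N\sum_{i=1}^N J^i(u,\dots,u)$ to $J^1(u,\dots,u)$. Using \eqref{eq:poc} for $k=1$ together with the accompanying convergence of the empirical measure $\mu^N_t$ toward the deterministic marginal $\mathbb{P}^u(t)$, and the boundedness of $f$ and $g$ from Assumption~\ref{assump:derivative}, one passes to the limit inside the expectation to identify the limit with $J(u)$. I expect this last step to be the main obstacle: the running and terminal costs are evaluated at the random empirical measure $\mu^N_t$, whereas $J(u)$ is evaluated at the deterministic marginal $\mathbb{P}^u(t)$, so closing the argument requires propagation of chaos at the level of the empirical measure together with an appropriate continuity-in-measure property of $f$ and $g$ and a uniform-integrability argument to move the limit through both the expectation and the time integral.
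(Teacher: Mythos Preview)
Your proposal is correct and follows the same route as the paper: verify the hypotheses of \cite[Thm.~2.6]{lacker2018strong} (progressive measurability, boundedness via \eqref{eq:betaineqality0}, and $d_{TV}$-Lipschitz continuity of $\beta$) to obtain \eqref{eq:poc}, and then pass to the limit in the social cost using convergence of the empirical measure together with boundedness and $y$-continuity of $f,g$. For the step you flag as the main obstacle, the paper resolves it by invoking \cite[Prop.~2.2]{sznitman1991topics} to upgrade \eqref{eq:poc} to the weak convergence $\mathbb{P}^{N,\mathbf{u}}\circ(\mu^N)^{-1}\to\delta_{\mathbb{P}^u\circ X_\cdot^{-1}}$ in $\mathcal{P}(\mathcal{P}(\Omega))$, and then rewrites $\frac1N\sum_i J^i$ as an integral of $f$ and $g$ against $\mu^N$ (rather than reducing to $J^1$ by exchangeability), so that the limit follows directly from this weak convergence.
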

\begin{proof}
We denote by $\mathcal{E}(\mathcal{P}(\Omega))$ the smallest $\sigma$-field on $\mathcal{P}(\Omega)$ such that the map $\mu\mapsto \int_\Omega \phi d\mu$ is measurable for all bounded and measurable $\phi : \Omega \rightarrow\mathbb{R}$.
As pointed out in \cite{lacker2018strong}, $\mathcal{E}(\mathcal{P}(\Omega))$ coincides with the Borel $\sigma$-field on $\mathcal{P}(\Omega)$ generated by the topology of weak convergence.

To verify the assumptions of
\cite[Thm. 2.6]{lacker2018strong}, we note that $\beta$ is progressively measurable with respect to $\mathbb{F}$ and that $\beta$ is Lipschitz continuous in the measure-valued argument with respect to $d_{TV}$. This implies condition ($\mathcal{E}$) in \cite{lacker2018strong}, the $\mathcal{E}(\mathcal{P}(\Omega))$-measurability of the function 
\begin{equation}
\begin{aligned}
    F_{s,t} : \mathcal{P}(\Omega) &\rightarrow \mathbb{R},
    \\
    F_{s,t}(\nu) &=\int_{\Omega}\int_s^t\left|\beta(u,\omega,\nu_t)
    - \beta(u,\omega,\mathbb{P}^u(t))\right|^2du\; \nu(d\omega),
\end{aligned}
\end{equation}
the $\tau(\Omega)$-continuity of $F_{s,t}$, and the inequality (2.3) from \cite[Thm. 2.6]{lacker2018strong}.
Furthermore, $\beta$ is bounded, implying condition (A) in \cite{lacker2018strong}.
% To satisfy the measurability conditions, we note that the measure-valued argument of $\beta$ is an element in  $\mathcal{P}(\mathbb{R}^d)$ which has support only on the bounded set $\bar{\mathcal{D}}$. For such measures, continuity with respect to total variation implies continuity and hence measurability with respect to $\mathcal{E}(\mathcal{P}(\mathbb{R}^d))$, since the Wasserstein distance is controlled by the total variation on any bounded domain \cite[Thm. 6.13]{villani2008optimal}.
So the propagation of chaos \eqref{eq:poc} holds.

By \cite[Prop. 2.2]{sznitman1991topics}, the propagation of chaos implies that $\mathcal{P}(\mathcal{P}(\Omega)) \ni M^N := \mathbb{P}^{N,\mathbf{u}}\circ(\mu^N)^{-1} \rightarrow \delta_{\mathbb{P}^u\circ X_\cdot^{-1}}$ in the weak topology. By assumption, $f$ and $g$ are bounded and continuous in the $y$-argument. Hence,
\begin{equation}
\begin{aligned}
    &\lim_{N\rightarrow\infty}\frac{1}{N}\sum_{i=1}^N J^i(u,\dots, u)
    \\
    &=
    \lim_{N\rightarrow\infty}
    \frac{1}{N}\sum_{i=1}^N E^{N,\mathbf{u}}\left[\int_0^T 
    f
    \left(t, X^{N,i}, \mu^N_t, \varphi(X^{N,i}_{\cdot\wedge t})\right)dt + g(X_T^{N,i}, \mu^N_T)
    \right]
     \\
     &=
     \lim_{N\rightarrow\infty}
     E^{N,\mathbf{u}}
     \Bigg[
     \int_0^T \int_{\Omega}
     f\left(t, \omega', \mu^N_t,\varphi(\omega'_{\cdot\wedge t})\right)\mu^N(d\omega')dt 
     \\
     &\hspace{4cm}+ 
     \int_{\Omega}g(\omega'(T), \mu^N_T)\mu^N(d\omega')
     \Bigg]
     \\
    %  &=
    %  \lim_{N\rightarrow\infty}
    %  \int_{\mathcal{P}(\Omega)}
    %  \int_0^T \int_{\Omega}
    %  f\left(t, \omega', m_t,\varphi(\omega'_{\cdot\wedge t})\right)m(d\omega')M^N(dm)dt 
    %  \\
    %  &+ \lim_{N\rightarrow\infty}
    %  \int_{\mathcal{P}(\Omega)}\int_{\Omega}g(\omega'(T), m_T)m(d\omega')M^N(dm)
    %  \\
     &=
     \lim_{N\rightarrow\infty}
     \int_0^T \int_{\mathcal{P}(\Omega)} \left\{
     \int_{\Omega}
     f\left(t, \omega', \int_{\Omega}r_f(\omega''(t))m(d\omega''),\varphi(\omega'_{\cdot\wedge t})\right)m(d\omega')
     \right\}
     \\
     &\hspace{8cm}M^N(dm)dt 
     \\
     &+ \lim_{N\rightarrow\infty}
     \int_{\mathcal{P}(\Omega)}\int_{\Omega}
     g\left(\omega'(T), \int_{\Omega}r_g(\omega''(T))m(d\omega'')
     \right)m(d\omega')M^N(dm).
    \\
    &=
    E^u\left[\int_0^T f\left(t, X_\cdot, \mathbb{P}^u(t)\right)dt + g\left(X_T, \mathbb{P}^u(T)\right) \right] = J(u).
\end{aligned}
\end{equation}
\end{proof}

% Let $(\Omega, \mathcal{F}, P)$ be a probability space supporting an i.i.d. sequence of processes $X^i$, the weak solution to
% \begin{equation}
% \left\{
% \begin{aligned}
%         dX_t^{i} &= \left(\sigma(X_t^{i})\beta\left(t,X_\cdot^{i}, 
%         \mathbb{P}^u(t)\right) + a(X_t^{i})\right)dt + \sigma(X_t^{i})dW_t^{i},
%         \\ 
%         X_0^{i} &= \xi^{i}, \quad i=1,\dots, N,
% \end{aligned}
% \right.
% \end{equation}
% where $\xi^1,\dots$ are i.i.d. and $\lambda$-distributed.
% By Theorem~\ref{thm:EU-mfeq} $X^i_\cdot$ is $\mathbb{P}^u$-distributed for all $i = 1,\dots, N$.
%For $N\in\mathbb{N}$, let $\mathbb{F}^N$ denote the filtration generated by $(X^1,\dots, X^N)$, then there exists i.i.d. Wiener processes $W^1,\dots$ such that
%and such that $W^i$ is adapted to the filtration generated by $X^i$. 
% Let 
% \begin{equation}
%     \Delta^i_t := \beta(t, X_\cdot^i, \mu^n_t) - \beta(t, X_\cdot^i, \mathbb{P}^u(t)).
% \end{equation}
% By Girsanov's theorem, $W^{N,i}_\cdot = W^i_\cdot - \int_0^\cdot\Delta^i_t dt$ defines a $P^n$-Wiener process and
% \begin{equation}
%      dX_t^{i} = \left(\sigma(X_t^{i})\beta\left(t,X_\cdot^{i}, \mu^N_t\right) + a(X_t^{i})\right)dt + \sigma(X_t^{i})dW_t^{N,i},
% \end{equation}
% Hence $P^n\circ(X^1,\dots, X^N)^{-1}$ is a weak solution to the $N$-particle system \eqref{eq:transformed_partsys}, and
% \begin{equation}
%     \widetilde{\mathbb{P}}\circ(X^{N,1},\dots, X^{N,N})^{-1} = P^N\circ(X^1,\dots, X^N)^{-1},
% \end{equation}

\section{Examples}
\label{sec:ex}

As a first step in model validation, experimental results on pedestrian speed 
profiles in a long narrow corridor are replicated in this section. The 
application of the proposed approach also displays the new features it offers 
regarding behavior near walls. From the necessary optimality conditions we 
derive an expression for the optimal control valid in following two toy examples 
and the corridor scenario. The numerical simulations are based on the particle system approximation derived in Section \ref{sec:micro}.

Throughout the rest of this section it is assumed that the compact set $U$ is 
convex and sufficiently large so that all optimal control in the following 
analytical expressions are admissible.
Furthermore, it is assumed that $r_g$ is differentiable and that
$(\hat{u},L^{\hat{u}})$ is optimal for the mean-field type control problem 
\eqref{eq:cost_wrt_L}-\eqref{eq:likelihood_dynamics}. We recall the first order 
adjoint equation,
\begin{equation}
    \label{eq:foaex}
\left\{
\begin{aligned}
    dp_t 
  &= 
  -\Big( q_t\beta^{\hat{u}}_t + 
E\left[q_tL_t^{\hat{u}}\nabla_y\beta^{\hat{u}}_t\right]r_\beta(X_t) 
  \\
  &\hspace{2cm}
  - f^{\hat{u}}_t - 
E\left[L^{\hat{u}}_t\nabla_yf^{\hat{u}}_t\right]r_f(X_t)\Big) dt
    + q_tdB_t,
  \\
  p_T 
  &=
  -g^{\hat{u}}_T - E\left[L_T^{\hat{u}}\nabla_yg^{\hat{u}}_T\right]r_g(X_T).
\end{aligned}
\right.
\end{equation}
Rewriting $E[L_t^{\hat{u}}Y_t]=E^{\hat{u}}[Y_t]$ and changing measure to 
$\mathbb{P}^{\hat{u}}$, \eqref{eq:foaex} becomes
\begin{equation}
 \label{eq:qv2}
\left\{
\begin{aligned}
    dp_t 
  &= 
  - A_t dt
    + q_tdB^{\hat{u}}_t,
  \\
  p_T 
  &=
  -g^{\hat{u}}_T - E^{\hat{u}}\left[\nabla_yg^{\hat{u}}_T\right]r_g(X_T),
\end{aligned}
\right.
\end{equation}
where $A_t :=  E^{\hat{u}}\left[q_t\nabla_y\beta^{\hat{u}}_t\right]r_\beta(X_t) 
  - f^{\hat{u}}_t - E^{\hat{u}}\left[\nabla_yf^{\hat{u}}_t\right]r_f(X_t).$
By the martingale representation theorem (see e.g. \cite[pp. 182]{karatzas1988brownian}) 
$p$ can be written as the conditional expectation
\begin{equation}
\label{eq:pcond}
    p_t = -E^{\hat{u}}\left[ g^{\hat{u}}_T + E^{\hat{u}}[\nabla_y 
g^{\hat{u}}_T]r_g(X_T)\ |\ \mathcal{F}_t\right] + E^{\hat{u}}\left[\int_t^T A_s 
ds\ |\ \mathcal{F}_t\right].
\end{equation}
The theorem applies to our problem since $g$ and its $y$-derivative are assumed to be
bounded. Let
\begin{equation}
    \phi\left(t, X_t\right) := g\left(X_t, 
E^{\hat{u}}[r_g(X_t)]\right) + E^{\hat{u}}[\nabla_y g^{\hat{u}}_t]r_g(X_t).
\end{equation}
By Dynkin's formula,
\begin{equation}
    E^{\hat{u}}[ \phi(T, X_T)\ |\ \mathcal{F}_t] 
    = 
    \phi(t, X_t) 
    + \int_t^T E^{\hat{u}}
    \left[
    \left(\mathcal{G} + \partial_s 
    \right)\phi\left(s,X_s\right)\ |\ \mathcal{F}_t
    \right] ds,
\end{equation}
where $\mathcal{G}$ is the generator of the coordinate process and 
$\partial_{s}$ denotes differentiation with respect to time, working on the two 
remaining arguments of $\phi$. Hence, by applying It$\hat{\text{o}}$'s formula 
on $p$ in \eqref{eq:pcond}, where only $X_\cdot$ contributes to the diffusion 
part, and matching the diffusion parts of that and $p$ from \eqref{eq:qv2}, we 
get
\begin{equation}
    \label{eq:q}
    q_s = -\nabla_x \phi(s,X_s) \sigma(X_s).
\end{equation}
The local optimality condition in the case of
a convex $U$ and coefficients differentiable in $u$, 
given in \eqref{eq:opt_cond_convex} right below Theorem~\ref{thm2},
can be used to write $\hat{u}$ in terms of the other processes. 
To use it, we make the following assumption.
\begin{assumption}
\label{assump:diff_in_u}
 The functions $(t,x,y,u)\mapsto (f,\beta)(t, x, y ,u)$ are differentiable with respect to $u$.
\end{assumption}
\noindent
With Assumption~\ref{assump:diff_in_u} in force, an optimal control $\hat{u}$ satisfies the 
the local optimality condition. The local 
optimality condition is satisfied by any $\hat{u}$ such that
$\nabla_u\mathcal{H}(L_t^{\hat{u}}, \hat{u}_t, p_t, q_t) = 0$
for almost every $t\in[0,T]$, $\mathbb{P}$-a.s., i.e.
\begin{equation}
\label{eq:u}
    q_t\nabla_u \beta^{\hat{u}}_t = \nabla_u f^{\hat{u}}_t, \quad \text{a.e. } t\in[0,T],
   \ \mathbb{P}\text{-a.s.}.
\end{equation}
Since $\mathbb{P}^{\hat{u}}$ is absolutely continuous with respect to 
$\mathbb{P}$, the equality above also holds for almost every $t\in[0,T]$ 
$\mathbb{P}^{\hat{u}}$-a.s. 
We have now at hand an expression for the optimal control whenever we can solve 
\eqref{eq:q}-\eqref{eq:u} for $\hat{u}$.

\subsection{Linear-quadratic problems with convex $U$}

\subsubsection{A non-mean-field example}
Let  $\mathcal{D} \subset \mathbb{R}^d$ be an admissible domain and $\mathbb{P}$ 
the probability measure on the space of continuous paths under which the 
coordinate process solves \eqref{eq:non_trans_state}. Consider the following linear-quadratic 
optimal control problem on $\mathcal{D}$,
 \begin{equation*}
 \left\{
  \begin{aligned}
   \min_{u\in \mathcal{U}}&\ \frac{1}{2}E\left[\int_0^T L_t^u |u_t|^2dt + 
L_T^u |X_T-x_T|^2\right],
   \\
   \text{s.t.}&\ dL_t^u = L_t^u u_t^* dB_t,\quad L_0^u = 1,
  \end{aligned}
  \right.
 \end{equation*}
 where $B$ is a $\mathbb{P}$-Brownian motion. The necessary optimality condition 
\eqref{eq:u} yields
\begin{equation}
    \label{eq:ex1u}
  \hat{u}_t = q^*_t, \quad \mathbb{P}\text{-a.s.},\ \text{a.e. } t\in[0,T].
 \end{equation}
Matching the diffusion coefficients gives us the optimal control,
\begin{equation}
\label{eq:ex1size}
    \hat{u}_t = -\sigma(X_t)\left(X_t - x_T\right), \quad 
\mathbb{P}\text{-a.s.},\ \text{a.e. } t\in[0,T].
\end{equation}
The corresponding likelihood process solves
\begin{equation*}
 dL^{\hat{u}}_t =  -L^{\hat{u}}_t\left(X_t-x_T\right)^*\sigma(X_t)dB_t,\quad 
L^{\hat{u}}_0 = 1,
\end{equation*}
and under $\mathbb{P}^{\hat{u}}$, the optimally controlled path  distribution, 
the coordinate process solves
\begin{equation}
\label{eq:LQ-sol}
 \begin{aligned}
  dX_t 
  &= a(X_t)dt + \sigma(X_t)dB_t
  \\
  &=
  a(X_t)dt + \sigma(X_t)\left(-\sigma(X_t)\left(X_t-x_T\right)dt +
dB^{\hat{u}}_t\right)
  \\
  &=
  \left(a(X_t) - \sigma(X_t)\left(X_t-x_T\right)\right)dt + 
\sigma(X_t)dB^{\hat{u}}_t.
 \end{aligned}
\end{equation}
We have used the fact that $\pi^2 = \pi = \pi^*$, which holds since $\pi$ is an 
orthogonal projection. 

\subsubsection{A mean-field example}

Consider now on some admissible domain $\mathcal{D} \subset \mathbb{R}^d$ the 
mean-field type optimal control problem
 \begin{equation*}
 \left\{
  \begin{aligned}
   \min_{u\in \mathcal{U}}&\ \frac{1}{2}E\left[\int_0^T L_t^u |u_t|^2dt + 
L_T^u \left|X_T-E\left[L_T^u X_T\right]\right|^2\right],
   \\
   \text{s.t.}&\ dL_t^u = L_t^u u_t^* dB_t,\quad L_0^u = 1.
  \end{aligned}
  \right.
 \end{equation*}
 As before, $B$ is a $\mathbb{P}$-Brownian motion, where $\mathbb{P}$ is a 
probability measure on the path space under which the coordinate process solves 
\eqref{eq:non_trans_state}.
Then $E^{\hat{u}}[\nabla_y g^{\hat{u}}_t] = 0$, so (since $r_g(x) = x$ here)
\begin{equation*}
    \nabla_x \phi\left(t, X_t\right) = \left(X_t - 
    E^{\hat{u}}[X_t]\right)^*,
\end{equation*}
and \eqref{eq:u} yields $\hat{u}_t = -\sigma(X_t)(X_t - E^{\hat{u}}[X_t])$ 
$\mathbb{P}$-a.s. for almost every $t\in[0,T]$. Under $\mathbb{P}^{\hat{u}}$ the 
coordinate process solves
\begin{equation*}
  dX_t 
  =
  \left(a(X_t) - 
\sigma(X_t)\left(X_t-E^{\hat{u}}\left[X_t\right]\right)\right)dt + 
\sigma(X_t)dB^{\hat{u}}_t.
\end{equation*}

\subsection{Unidirectional pedestrian motion in a corridor}

Experimental studies have been conducted on the impact of proximity to walls on 
pedestrian speed. Pedestrian speed profiles heavily depend on circumstances like 
location, weather, and congestion. In this section, we will replicate two 
scenarios of unidirectional motion in a confined domain with the proposed 
mean-field type optimal control model. 
Especially, we are interested in how the proposed model behaves on the boundary 
and if boundary movement characteristics can be influenced through the running 
cost $f$. 
Sticky boundaries and boundary diffusion grants our pedestrians controlled 
movement at the boundary. By altering the internal parameters of these effect, 
we are able to shape the mean speed profile at the boundary.

Zanlungo \textit{et al.} \cite{zanlungo2012microscopic} observe that in a 
tunnel connecting a shopping center with a railway station in Osaka, Japan, 
pedestrians tend to lower their walking speed when walking close to the walls. 
The authors obtain a concave cross-section average speed profile from their 
experiment, with its maximum approximately at the center of the corridor. The 
average speed at the center of the corridor is about 10\% higher than that of 
near-wall walkers. 

Daamen and Hoogendoorn \cite{daamen2007flow} on the other hand observe (in a 
controlled environment) pedestrian speeds that are higher at the boundary than 
in the interior of the domain. In their experiment, a unidirectional stream of 
pedestrians walk in a wide corridor that at a certain point, at a 
\textit{bottleneck}, shrinks into a tight corridor. Upstream from the 
bottleneck, pedestrians close to the corridor walls move more freely due to less 
congestion, compared to those at the center of the corridor. The experiment 
results in a cross-section speed profile with more than twice as high average 
pedestrian speed in the low-density regions along corridor walls compared to the 
center of the corridor.

By modeling congestion with simple mean-dependent effects, we can replicate the 
overall shape of the average speed profiles of both 
\cite{zanlungo2012microscopic} and \cite{daamen2007flow} (not the density 
profile, to achieve this one needs a more sophisticated mean-field model). Our 
reason for implementing only mean-dependent effects, not of non-local 
distribution-dependent effects like those considered in for example 
\cite{aurell2018mean}, is solely to simplify the analysis.

Consider a long narrow corridor with walls parallel to the $x$-axis at $y=-0.1$ 
and $y=0.1$. Our analysis requires $\mathcal{D}$ to be $C^2$-smooth, so the 
effective corridor (the corridor perceived by the pedestrians) has rounded 
corners. However, the corners will not have any substantial effect on the 
simulation results since the crowd is initiated so far away from the target that 
under the chosen coefficient values, the pedestrians will not reach it ahead of 
the time horizon $T=1$.
On this domain, crowd behavior is modeled with the following optimal control 
problem
\begin{equation}
\left\{
    \begin{aligned}
     \min_{u_\cdot\in\mathcal{U}}\ &\frac{1}{2}E\left[\int_0^1 L_t^u 
f\left(t,X_\cdot, E\left[L_t^ur_f(X_t)\right], u_t\right) dt + 
L_T^u\left|X_T-x_T\right|^2\right],
     \\
     \text{s.t.}\ & dL_t^u = L_t^u u_t dB_t,\quad L_0^u = 1,
    \end{aligned}
\right.
\end{equation}
where $B$ is a Brownian motion under $\mathbb{P}$, the probability measure under 
which $X_\cdot$ solves \eqref{eq:non_trans_state} with $\gamma = 0.5$, and $x_T$ is the location of 
an exit at the end of the corridor. The choice of $\gamma$ is made so that the plots below are 
visually comparable. The running cost $f$ is of congestion-type,
\begin{equation*}
    f\left(t, X_\cdot, E\left[L_t^u r_f(X_t)\right], u_t\right)
    = \mathcal{C}(X_t)\Big(c_f + h\left(t, X_\cdot, 
E^u\left[r_f(X_t)\right]\right)\Big)u^2_t,
\end{equation*}
where $c_fu^2$, $c_f>0$, is the cost of moving in free space, and $hu^2$ the 
additional cost to move in congested areas. The coefficient $\mathcal{C}(X_t) := 
c_\Gamma1_\Gamma(X_t) + 1_\mathcal{D}(X_t)$, $c_\Gamma>0$, is used to monitor 
$f$ (though it is not our control process) on the boundary $\Gamma$. The cost of 
moving on the boundary is increasing with $c_\Gamma$, so for high $c_\Gamma$ we 
expect  lower speed on the boundary. We know from \eqref{eq:u}-\eqref{eq:q} that
\begin{equation}
\label{eq:feasible}
        q^*_t 
        = 
        \mathcal{C}(X_t)\Big(c_f + h\left(t,X_\cdot, 
E^{\hat{u}}\left[r_f(X_t)\right]\right)\Big)\hat{u}_t,\ \
        q_t
        =
        -(X_t-x_T)^*\sigma(X_t).
\end{equation}
Matching the expressions in \eqref{eq:feasible} yields the optimal control
\begin{equation*}
    \hat{u}_t = \frac{\sigma(X_t)\left(X_t - 
x_T\right)}{\mathcal{C}(X_t)\Big(c_f+h\left(t,X_\cdot, 
E^{\hat{u}}\left[r_f(X_t)\right]\right)\Big)}.
\end{equation*}
It implements the following strategy: move towards the target location $x_T$, 
but scale the speed according to the local congestion. Consider the two 
congestion penalties
\begin{equation}
\label{eq:h1h2}
    h_1 := \left|X_2(t) - E^{\hat{u}}\left[X_2(t)\right]\right|, \hspace{6pt} 
h_2 := \frac{1}{\left|X_2(t) - E^{\hat{u}}\left[X_2(t)\right]\right|},
\end{equation}
where $X_2(t)$ is the second (the $y$-)component of the coordinate process, i.e. 
the component in the direction perpendicular to the corridor walls. Stickiness 
is set to $\gamma=0.5$. The choice of $h$ in \eqref{eq:h1h2} means that we have 
set $r_f(X_t) = X_2(t)$.

The corridor is split into $9$ segments parallel with the corridor walls. The 
mean speed is estimated in each segment for four different values of $c_\Gamma$ 
and the results corresponding to congestion penalty $h_1$ and $h_2$ are 
presented in Figure~\ref{fig:result1} and \ref{fig:result2}, respectively. The 
profiles plotted in Figure~\ref{fig:result1} attains the concave shape observed 
by \cite{zanlungo2012microscopic}, mimicking the fast track in the middle of the 
lane. In Figure~\ref{fig:result2} the profiles follow the convex shape observed 
by \cite{daamen2007flow}, taking into account that movement in the crowded 
center (mean of the group) is costly.   When $c_\Gamma$ is small, the 
pedestrians can travel further on the boundary for the same cost. Heuristically, 
the higher $\gamma$ is the longer it takes for the pedestrian to re-enter 
$\mathcal{D}$ and therefore a high $\gamma$ combined with a small $c_\Gamma$ 
yields the highest boundary speed. 
This effect is evident in the figures, where smaller values of $c_\Gamma$ 
results in higher mean speed at the boundary. We note that we are able to shape 
the mean speed at the boundary by our choice of model parameters.
\begin{figure}[ht!]
    \centering
    \includegraphics[width=1\textwidth, trim = 0cm 7cm 0cm 7cm]{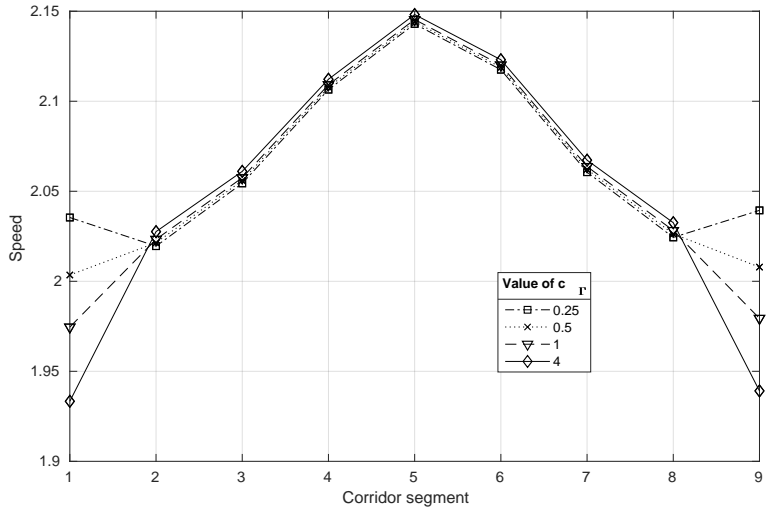}
    \caption{Mean speed in 9 segments of the corridor when $h = h_1$, estimated from $4000$ realizations of the controlled coordinate process.}
    \label{fig:result1}
\end{figure}
~\\ \vspace{1cm}
\begin{figure}[ht!]
    \centering
    \includegraphics[width=1\textwidth, trim = 2cm 7cm 0cm 9cm]{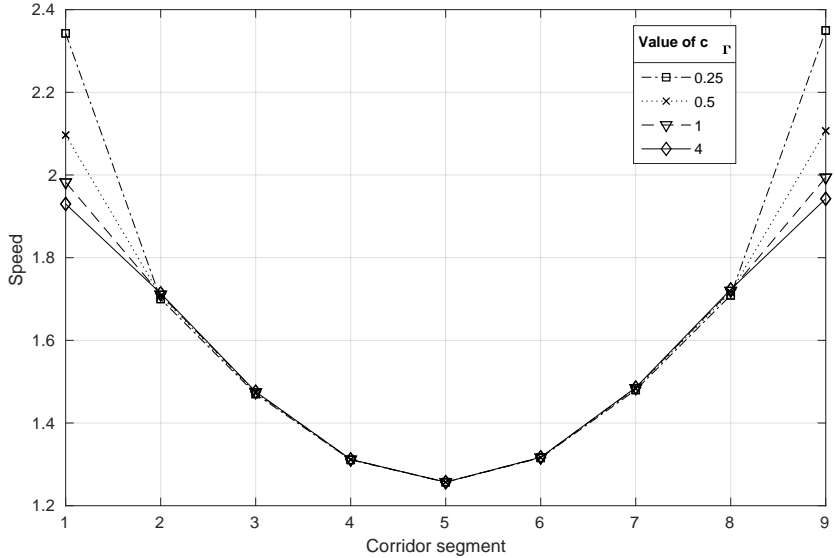}
    \caption{Mean speed in 9 segments of the corridor when $h=h_2$, estimated from $4000$ realizations of the controlled coordinate process.}
    \label{fig:result2}
\end{figure}

\section{Conclusion and discussion}
In this paper, we propose a variation of the 
mean-field approach to crowd modeling based on sticky reflected SDEs which to the best of our knowledge is new. The
proposed model accounts for pedestrians that spend some time at the boundary and that have the possibility to choose a new direction of motion.

We provide conditions for the proposed dynamics to admit a unique weak solution, 
which is the best we can hope for (cf. \cite{engelbert2014stochastic}). Then, we 
consider mean-field type optimal control of the proposed dynamic model and give 
necessary conditions for optimality with a Pontryagin-type stochastic maximum 
principle. There is a microscopic interpretation of the model even on the 
boundary of the domain and thus it has the potential to approximate 
optimal/equilibrium behavior of a pedestrian crowd on a microscopic (individual) 
level. We verify a propagation of chaos result in the uncontrolled case.

Pedestrians do often see and react to walls at a distance. This has been studied 
empirically, experiments are mentioned in the introduction. Force-based models 
can implement repulsing potential forces spiking to infinity at boundaries to 
keep the pedestrians away from the walls and inside the domain, effectively 
making it impossible for any pedestrian to reach a wall. A ranged, 
\textit{nonlocal}, interaction with walls will have a smoothing effect on 
pedestrian density, just like nonlocal pedestrian-to-pedestrian interaction has, 
as is noted in \cite{aurell2018mean}. Nonlocal interaction is an important 
aspect of pedestrian crowd modeling, but cannot give an answer to what will 
happen whenever a pedestrian actually reaches a wall. Interaction with walls at 
a distance can be included in our proposed model either in the drift, as is the 
case in force-based models, or through the cost functional, as in agent-based 
models.

An extension of the proposed framework would be to let the pedestrian control 
its stickiness, i.e. its motion in the normal direction of the boundary at the 
boundary. Stickiness is not necessarily a physical feature of the domain, but 
the time spent on the boundary may be subject to the pedestrian's preference. 
This aspect cannot be described by the proposed model, since the Girsanov change 
of measure does not effect stickiness (cf. Remark~\ref{remark:stickyness}). 
Another extension would be to consider the controlled diffusion case mentioned 
in the introduction.

\bibliographystyle{siam}
\bibliography{references}

\end{document}